\theoremstyle{plain}
\newtheorem{theorem}{Theorem}
\newtheorem{lemma}{Lemma}
\newtheorem{corollary}{Corollary}
\theoremstyle{definition}
\newtheorem{definition}{Definition}
\theoremstyle{remark}
\newtheorem{remark}{Remark}
\newtheorem{example}{Example}
\numberwithin{equation}{section}
\renewcommand{\@seccntformat}[1]{\csname the#1\endcsname.\hspace{1em}}
\def\DJ{\leavevmode\setbox0=\hbox{D}\kern0pt\rlap
 {\kern.04em\raise.188\ht0\hbox{-}}D}
\begin{document}

\title[Common fixed point theorems for hybrid F-contractions]
{Common fixed point theorems for Hybrid generalized $(F,\varphi)$-contractions
under common limit range property with applications}

\author[H. K. Nashine]{Hemant Kumar Nashine$^1$}
\address{$^1$Department of Mathematics, Amity School of Applied Sciences,
Amity University Chhattisgarh, Manth/Kharora (Opp.\ ITBP) SH-9,
Raipur, Chhattisgarh - 493225, India.}
\email{drhknashine@gmail.com}
\author[M. Imdad]{Mohammad Imdad$^{2}$}
\address{$^{2}$Department of Mathematics, Aligarh Muslim University,
 Aligarh, UP-202002, India.}
\email{mhimdad@yahoo.co.in}
\author[M. Ahmadullah]{Md Ahmadullah$^{3,*}$}
\address{$^{3,*}$Department of Mathematics,
Aligarh Muslim University,
Aligarh, UP-202002, India.}
\email{ahmadullah2201@gmail.com}
\address {$^*$ corresponding author}

\begin{abstract}
We consider a relatively new hybrid generalized $F$-contraction involving a pair of mappings
and utilize the same to prove a common fixed point theorem for a
hybrid pair of occasionally coincidentally idempotent mappings
satisfying generalized $(F,\varphi)$-contraction condition
under common limit range property in complete metric
spaces. A similar result involving a hybrid pair of mappings satisfying a
Rational type Hardy-Rogers $(F,\varphi)$-contractive condition is also proved.
Our results generalize and improve several results of the existing
 literature. As applications of our results, we prove two theorems for
 the existence of solutions of certain system of functional equations
arising in dynamic programming, and Volterra integral inclusion besides
providing an illustrative example.
\end{abstract}

\keywords{multi-valued mappings; hybrid pair of mappings; common
limit range property; occasionally coincidentally idempotent
mappings; dynamic programming; integral inclusion.}

\subjclass[2010]{47H09, 47H10, 45G99, 90C39.}

\maketitle

\section{Introduction and preliminaries}
Let $(X,d)$ be a metric space. Then, following the Nadler \cite{Nadler}, we adopt the following notations:
\begin{itemize}
  \item $CL(X)=\{A: A$ is a non-empty closed subset of $X$\}.
  \item $CB(X)=\{A: A$ is a non-empty closed and bounded subset of $X$\}.
  \item For non-empty closed and bounded subsets $A,B$ of $X$ and $x\in X$,
 $$d(x,A)=\inf\big\{d(x,a):a\in A\big\}$$
  and
  $$\mathcal{H}(A,B)=\max\Big\{\sup\big\{d(a,B):a\in A\big\},\sup\big\{d(b,A):b\in B\big\}\Big\}.$$
\end{itemize}

Recall that $CB(X)$ is a metric space with the metric
$\mathcal{H}$ which is known as the Hausdorff-Pompeiu metric on
$CB(X)$.

 \vspace{.3cm}
In 1969, Nadler \cite{Nadler} proved that every multi-valued contraction mapping defined  on
a complete metric space has a fixed point.
In proving this result, Nadler used the idea of Hausdorff metric to establish the multi-valued version of Banach
Contraction Principle which runs as follows:

\begin{theorem}\label{nadler}
Let $(X,d)$ be a complete metric space and $\mathcal{T}$ a mapping from ${X}$ into $CB({X})$ such that
for all $x,y \in {X}$,
$$ \mathcal{H}(\mathcal{T}{x}, \mathcal{T}{y}) \leq {\lambda} d(x,y),$$
where $\lambda \in [0,1)$. Then $\mathcal{T}$ has a fixed point, $i.e.,$ there exists a point $x \in {X}$ such that $x \in
\mathcal{T}x$.
\end{theorem}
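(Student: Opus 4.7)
The plan is to mimic the Picard iteration proof of the Banach contraction principle, but with two adjustments to cope with the multi-valued setting: I must choose iterates carefully inside $\mathcal{T}x_n$, since the infimum defining $d(x_n, \mathcal{T}x_n)$ need not be attained in a general complete metric space (closed bounded sets need not be compact), and I must exploit the closedness of $\mathcal{T}z$ at the end to promote the equality $d(z,\mathcal{T}z)=0$ into $z\in\mathcal{T}z$.

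Fix $x_0 \in X$ and any $x_1 \in \mathcal{T}x_0$. For the inductive construction I would invoke the following elementary property of the Hausdorff--Pompeiu metric: for any $A,B\in CB(X)$, every $a\in A$ and every $\varepsilon>0$, there exists $b\in B$ with $d(a,b)\leq \mathcal{H}(A,B)+\varepsilon$. Applying this at step $n$ with $\varepsilon=\lambda^{n}$ (the case $\lambda=0$ being trivial, since then $\mathcal{H}(\mathcal{T}x,\mathcal{T}y)=0$ forces $\mathcal{T}$ to be a constant closed set, every point of which is a fixed point), I select $x_{n+1}\in \mathcal{T}x_n$ satisfying
$$d(x_n,x_{n+1})\;\leq\;\mathcal{H}(\mathcal{T}x_{n-1},\mathcal{T}x_n)+\lambda^{n}\;\leq\;\lambda\,d(x_{n-1},x_n)+\lambda^{n}.$$
Iterating this recursion gives the bound $d(x_n,x_{n+1})\leq \lambda^{n} d(x_0,x_1)+n\lambda^{n}$.

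Since $\lambda\in[0,1)$, both $\sum \lambda^{n}$ and $\sum n\lambda^{n}$ converge, so $\sum d(x_n,x_{n+1})$ converges. Hence $\{x_n\}$ is Cauchy, and by completeness of $(X,d)$ it converges to some $z\in X$. To see that $z$ is a fixed point I would estimate, using $x_{n+1}\in \mathcal{T}x_n$,
$$d(z,\mathcal{T}z)\;\leq\;d(z,x_{n+1})+d(x_{n+1},\mathcal{T}z)\;\leq\;d(z,x_{n+1})+\mathcal{H}(\mathcal{T}x_n,\mathcal{T}z)\;\leq\;d(z,x_{n+1})+\lambda\,d(x_n,z),$$
which tends to $0$ as $n\to\infty$. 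Since $\mathcal{T}z$ is closed, $d(z,\mathcal{T}z)=0$ forces $z\in \mathcal{T}z$.

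The main obstacle is really the very first step: unlike the single-valued case, where $x_{n+1}=\mathcal{T}x_n$ is forced on us, here the non-attainment of the relevant infima means we need a positive slack (any summable sequence $\varepsilon_n>0$ would do) when selecting $x_{n+1}$. The choice $\varepsilon_n=\lambda^{n}$ is convenient because it keeps the slack term comparable with the contractive decay, so that the bound $d(x_n,x_{n+1})\leq \lambda^{n}d(x_0,x_1)+n\lambda^{n}$ still yields a summable tail and hence a Cauchy sequence; every other step is routine once this selection has been made.
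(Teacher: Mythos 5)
Your proof is correct and is exactly the classical argument of Nadler (the $\varepsilon_n=\lambda^n$ slack in the selection $x_{n+1}\in\mathcal{T}x_n$, the bound $d(x_n,x_{n+1})\le\lambda^n d(x_0,x_1)+n\lambda^n$, summability, completeness, and closedness of $\mathcal{T}z$ to conclude $z\in\mathcal{T}z$). The paper itself states Theorem~\ref{nadler} without proof, citing Nadler's original article, so there is nothing in the text to diverge from; your write-up supplies the standard proof correctly.
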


\vspace{.3cm}
Hybrid fixed point theory involving pairs of single-valued and
multi-valued mappings is a relatively new development in Nonlinear Analysis
(e.g.\cite{Fisher,Hu,IKS,Kubiak,Naimpally,SKI} and
references therein). The much discussed concepts of commutativity and weak
commutativity were extended to hybrid pair of mappings on metric
spaces by Kaneko~\cite{K1,K2}. In 1989, Singh et al.~\cite{SHC}
extended the notion of compatible mappings and obtained some
coincidence and common fixed point theorems for nonlinear hybrid
contractions. It was observed that under compatibility the fixed
point results usually require continuity of one of the underlying
mappings. Afterwards, Pathak \cite{Pathak} generalized the concept
of compatibility by defining weak compatibility for hybrid pairs
of mappings (including single valued case as well) and utilized the same to
prove common fixed point theorems. Naturally, compatible mappings
are weakly compatible but not conversely.

\vspace{.3cm}
In 2002, Aamri and El-Moutawakil \cite{Aamri} introduced (E.A) property for single-valued mappings.
Later,  Kamran \cite{Kamran} extended the notion of (E.A) property to hybrid pairs of mappings. In 2011,
Sintunavarat and Kumam \cite{Sintunavarat} introduced the notion
of common limit range (CLR) property for single-valued mappings and
showed its superiority over the property (E.A). Motivated by this
fact, Imdad et al.~\cite{ICSA} established common limit range
property for a hybrid pair of mappings and proved some fixed point
results in symmetric (semi-metric) spaces. For more details on hybrid contraction conditions, one can consult
\cite{Ali,CKI,Dhompongsa,IAK,Imdad3,KCI,Kaneko,Naimpally,PKC,PK,Singh,SM,SM1}.

\vspace{.3cm}
The following definitions and results are standard in the theory of hybrid pair of mappings.

\vspace{.3cm}
\begin{definition}\label{D0}
Let $f:X\rightarrow X$ and $T:X\rightarrow CB(X)$ be a single-valued and multi-valued mapping respectively. Then
\begin{itemize}
  \item A point $x\in X$ is a fixed point of $f$ (resp.\ $T$) if $x=fx$ (resp.\ $x\in Tx$).
The set of all fixed points of $f$ (resp.\ $T$) is denoted by $F(f)$ (resp. $F(T)$).
  \item A point $x\in X$ is a coincidence point of $f$ and $T$ if $fx\in Tx$.
The set of all coincidence points of $f$ and $T$ is denoted by $\mathcal{C}(f,T)$.
  \item A point $x\in X$ is a common fixed point of $f$ and $T$ if $x=fx\in Tx$.
The set of all common fixed points of $f$ and $T$ is denoted by $F(f,T)$.
  \item $T$ is a closed multi-valued mapping if the graph of $T~i.e.,
   ~G(T)=\{(x, y) : x \in X, y \in Tx\}$ is a closed subset of $X \times X$.
\end{itemize}
\end{definition}

We also recall the following terminology often used in the considerations of a hybrid
pairs of mappings.

\begin{definition}\label{D1}
Let $(X,d)$ be a metric space with $f:X\rightarrow X$ and $T:X\rightarrow CB(X)$. Then a hybrid pair of mappings $(f,T)$ is said to be:
\begin{itemize}
  \item commuting on $X$ \cite{K1} if $fTx\subseteq Tfx~~ \forall x\in X$.
  \item weakly commuting on $X$ \cite{K2} if $\mathcal{H}(fTx,Tfx)\leq d(fx,Tx) ~~\forall x\in X$.
  \item compatible \cite{SHC} if $fTx\in CB(X)~~\forall x\in X$ and $\lim\limits_{n\rightarrow\infty}\mathcal{H}(Tfx_{n},fTx_{n})=0$,
whenever $\{x_{n}\}$ is a sequence in $X$ such that $$\lim_{n\to \infty}Tx_{n}\rightarrow A\in CB(X)~ {\rm and}~ \lim_{n\to \infty}fx_{n}\rightarrow t\in A.$$
  \item non-compatible \cite{Kaneko} if $\exists$ at least one sequence $\{x_{n}\}$ in $X$ such that
$$\lim_{n\to \infty}Tx_{n}\rightarrow A\in CB(X)~{\rm  and}~ \lim_{n\to \infty}fx_{n}\rightarrow t\in A~{\rm but}~ \lim\limits_{n\rightarrow\infty}\mathcal{H}(Tfx_{n},fTx_{n})$$ is
either non-zero or nonexistent.
  \item weakly compatible \cite{JR3} if $Tfx= fTx$ for each $x\in \mathcal{C}(f,T)$.
  \item coincidentally idempotent \cite{IAK} if for every $v\in \mathcal{C}(f,T), ~ffv=fv~
  i.e., ~f$ is idempotent at the coincidence points of $f$ and $T$.
  \item occasionally coincidentally idempotent \cite{PL} if $ffv=fv$ for some $v\in \mathcal{C}(f,T)$.
  \item enjoy the property (E.A) \cite{Kamran} if $\exists$ a sequence $\{x_{n}\}$ in $X$ such that
  $$\lim_{n\rightarrow\infty}fx_{n}=t\in A=\lim_{n\rightarrow\infty}Tx_{n},$$
for some $t\in X$ and $A\in CB(X)$.
 \item enjoy common limit range property with respect to the mapping
$f$ (in short $CLR_{f}$ property) \cite{ICSA} if $\exists$ a sequence $\{x_{n}\}$ in $X$ such
that $$\lim_{n\rightarrow\infty}fx_{n}=fu\in A=\lim_{n\rightarrow\infty}Tx_{n},$$
for some $u\in X$ and $A\in CB(X)$.
\end{itemize}
\end{definition}

The following example demonstrates the interplay of the
occasionally coincidentally idempotent property with other notions described in the preceding definition.

\begin{example}\label{ex1}\cite[Example 1]{KCI}
Let $X=\{1,2,3\}$ (with the standard metric),
\[
f:\begin{pmatrix}1 & 2 & 3\\ 1 & 3 & 2
\end{pmatrix}\quad\text{and}\quad T:\begin{pmatrix}1 & 2 & 3\\
\{1\} & \{1,3\} & \{1,3\} \end{pmatrix}.
\]
Then, it is straight forward to observe the following:
\begin{itemize}
\item $\mathcal{C}(f,T)=\{1,2\}$, and $F(f,T)=\{1\}$.
\item $(f,T)$ is not commuting and not weakly commuting.
\item $(f,T)$ is not compatible.
\item $(f,T)$ is not weakly compatible.
\item $(f,T)$ is not coincidentally idempotent since $ff2=f3=2\ne3=f2$.
\item $(f,T)$ is occasionally coincidentally idempotent since $ff1=1=f1$.
\end{itemize}
Obviously, in this case $(f,T)$ is also non-compatible, but simple
modifications of this example can show that the occasionally
coincidentally idempotent property is independent of this notion,
too.
\end{example}

The following example (taken from \cite{KCI}) demonstrates
the relationship between (E.A) property and common limit range
property.

\begin{example}\label{example1}\cite[Examples 2 and 3]{KCI}
Let $X=[0,2]$ be a metric space equipped with the usual metric $d(x,y)=\vert
x-y\vert$. Define $f,g:X\rightarrow X$ and $T:X\rightarrow CB(X)$
as follows:
\begin{equation*}
fx=\begin{cases}
               2-x, & \hbox{if $0\leq x<1$,} \\
               \frac{9}{5}, & \hbox{if $1\leq x\leq2$;}
    \end{cases}
\quad gx=\begin{cases}
               2-x, & \hbox{if $0\leq x\leq1$,} \\
               \frac{9}{5}, & \hbox{if $1<x\leq2$;}
    \end{cases}
\quad
Tx=\begin{cases}
               \left[\frac{1}{2},\frac{3}{2}\right], & \hbox{if $0\leq x\leq1$,} \\
               \left[\frac{1}{4},\frac{1}{2}\right], & \hbox{if $1<x\leq2$.}
    \end{cases}
\end{equation*}

\vspace{.3cm} One can verify that the pair $(f,T)$ enjoys the property (E.A),
but not the $CLR_{f}$ property. On the other hand, the pair $(g,T)$ satisfies the
$CLR_{g}$ property.
\end{example}

\begin{remark}\label{remark1}
If a pair $(f,T)$ satisfies the property (E.A) along with the closedness of $f(X)$,
then the pair also satisfies the $CLR_f$ property.
\end{remark}

Throughout this paper, we denote by $\mathbb{R}$ the set of all
real numbers, by $\mathbb{R}^+$ the set of all positive real
numbers and by $\mathbb{N}$ the set of all positive integers. In
what follows, $\mathcal{F}$ denote the family of all
functions $F : \mathbb{R}^+ \to \mathbb{R}$ that satisfy the
following conditions:

\begin{itemize}

\item[(F$_1)$] $F$ is continuous and strictly increasing;

\item[(F$_2)$] for each sequence $\{\beta_n\}$ of positive numbers,
$\displaystyle\lim_{n \to \infty} \beta_n = 0~\iff~\displaystyle\lim_{n \to
\infty} F(\beta_n) = -\infty$;

\item[(F$_3$)] there exists $k \in (0, 1)$ such that
 $\displaystyle\lim_{\beta\to 0^+} \beta^k F(\beta) = 0$.
\end{itemize}

\vspace{.2cm}
Some examples of functions $F\in\mathcal{F}$ are $F(t)=\ln t$,
$F(t)=t+\ln t$, $F(t)=-1/\sqrt t$, see~\cite{Wardowski}.

\vspace{.3cm}
\begin{definition} \cite{Wardowski}
Let $(X, d)$ be a metric space. A self-mapping $T$ on $X$ is called an $F$-contraction if there exist
$F \in \mathcal{F}$ and $\tau \in \mathbb{R}^+$ such that
\begin{equation}\label{equation01}
\tau + F(d(Tx, Ty)) \leq F(d(x, y)),
\end{equation}
for all $x, y \in X$ with $d(Tx, Ty) > 0$.
\end{definition}

\begin{example}\cite{Wardowski} Let $F : \mathbb{R}^+ \to
\mathbb{R}$ be a mapping given by $F(x) = \ln x$. It is clear that $F$
satisfies (F$_1$)--(F$_3$) for any $k \in (0, 1)$. Under this setting, (\ref{equation01}) reduces to
$$d(Tx, Ty) \leq e^{-\tau} d(x, y), \text{ for all } x, y \in X,
\quad Tx \neq Ty.$$

Notice that for $x, y \in X$ such that $Tx = Ty$, the previous
inequality also holds and hence $T$ is a contraction.
\end{example}

In what follows, for a metric space $(X,d)$ and a multi-valued
mapping $T:X\to CL(X)$, we denote
\[
M(x,y)=\max \left\{d(x, y), d(x, Tx), d(y, Ty), \frac{1}{2}\big[d(x,
Ty) + d(y, Tx) \big] \right\}.
\]

\begin{definition}\cite{Sgroi}
Let $(X, d)$ be a metric space. A multi-valued mapping $T: X \to
CL(X)$ is called an $F$-contraction if there exist $F \in
\mathcal{F}$ and $\tau \in \mathbb{R}^+$ such that for all $x, y
\in X$ with $y \in Tx,~~ \exists z \in Ty$,
\begin{equation}\label{equation02}
\tau + F(d(y, z)) \leq F \left(M(x,y)\right), ~~{\rm whenever}~ d(y, z) > 0.
\end{equation}

\end{definition}

\begin{example}\cite{Sgroi}
Let $F : \mathbb{R}^+ \to \mathbb{R}$ be mapping given by $F(x) = \ln x$. Then
for each multi-valued mapping $T: X \to CL(X)$ satisfying
(\ref{equation02}), we have
$$
d(y, z) \leq e^{-\tau} M(x, y), \text{ for all } x, y \in X,\quad
z\in Ty, \ y\neq z.
$$
It is clear that for $z, y \in X$ such that $y = z$ the previous
inequality also holds.
\end{example}

Some fixed point results for single-valued (resp. multi-valued)
$F$-contractions were obtained in
\cite{Wardowski,Alsulami,Karapinar} (resp. \cite{Sgroi}).

\vspace{.3cm}
Our aim in this paper is to prove a common fixed point theorem
for a hybrid pair of occasionally coincidentally idempotent
mappings satisfying generalized $(F,\varphi)$-contraction condition,
under $CLR$ property in complete metric spaces. Also, a similar result for a variant of
Rational type Hardy-Rogers generalized $(F,\varphi)$-contractive condition is also derived. Here, it can be pointed out that Sgroi and Vetro~\cite{Sgroi} introduced and studied
such conditions for multi-valued mappings while the similar conditions were earlier introduced an studied
by Wardowski~\cite{Wardowski} for single-valued mappings. Our results generalize and improve
several known results of the existing literature. Finally, we utilize our results to prove the existence
of solutions of certain system of functional equations
arising in dynamic programming, as well as Volterra integral inclusion besides providing an illustrative example.

\vspace{.5cm}
\section{The Main Results}

This section is divided into two parts. In the first subsection,
we prove a common fixed point theorem for a hybrid pair of
occasionally coincidentally idempotent mappings satisfying a generalized $(F,\varphi)$-contractions condition via $CLR$
property in complete metric spaces, while in the
second one we obtain results for hybrid pairs which satisfy a
Rational Hardy-Rogers type $(F,\varphi)$-contractive condition.

\begin{definition}\label{MD1}
  Let $(X,d)$ be a metric space, $f: X \to X$ and $T : X \to CB(X)$.
  Then hybrid pair $(f,T)$ is said to be a generalized $(F,\varphi)$-contraction, if  there exist an increasing, upper semicontinuous mapping from the right
  $$
\Phi= \big\{\varphi: [0,\infty) \to [0,\infty)~| ~\limsup_{ s\to t^+}\varphi(s)<\varphi(t),~~ \varphi(t) <t, \forall ~, t > 0 \big\},
$$
$F \in \mathcal{F}$ and $\tau \in
\mathbb{R}^+$ such that
{\small\begin{align}\label{equation1}
\nonumber \tau &+ F(\mathcal{H}^p(Tx,Ty))\\
& \leq F\bigg( \varphi\bigg(\max \bigg\{\begin{array}{cc}
d^p(f x, T x), d^p(f y, T y), d^p(f y, f x),\frac{1}{2}\big[d^p(f x,T y) + d^p(f y, T x)\big], \\
\frac{d^p(f x, T x)d^p(f y, T y)}{1 + d^p(f y, fx)}, \frac{d^p(f x,T y)d^p(f y, Tx)}{1 + d^p(f y, f x)},
\frac{d^p(f x,T y)d^p(f y,Tx)}{1 + d^p(Tx, Ty)}
  \end{array}
  \bigg\} \bigg) \bigg)
\end{align}}
for all $x,y\in X$, $p\geq 1$ with $\mathcal{H}(Tx,Ty)>0.$
\end{definition}

\begin{definition}\label{MD2}
  Let $(X,d)$ be a metric space, $f: X \to X$ and $T : X \to CB(X)$. Then hybrid pair $(f,T)$ is said to be a Rational Hardy-Rogers $(F,\varphi)$-contraction, if  there exist an increasing,
upper semicontinuous mapping from the right
$$\Phi= \{ \varphi : [0,\infty) \to [0,\infty) ~|~ \limsup_{ s\to t^+}
\varphi(s) < \varphi(t),~ \varphi(t) < t, \forall \, t > 0 \},$$
$F \in \mathcal{F}$ and $\tau \in \mathbb{R}^+$ such that
{\small \begin{align}\label{equation2}
\nonumber \tau &+ F(\mathcal{H}^p(Tx,Ty))\\
& \leq F\bigg(\varphi\bigg(\begin{array}{cc}
{ \alpha} d^p(fx,fy) + \frac{ { \beta} \big[1+d^p(fx,Tx)\big]d^p(fy,Ty) }{1+ d^p(fx,fy)}+ {\gamma} \big[d^p(fx,Tx)+ d^p(fy,Ty)\big]\\
+\delta \big[{d^p(fx,Ty)} + d^p(fy,Tx)\big]
\end{array}
 \bigg) \bigg)
\end{align}}
for all $x, y \in X$  with $Tx \neq Ty $, where $p\geq 1$, $\alpha, \beta,
\gamma, \delta \geq 0$, $\alpha + \beta + 2\gamma +2 \delta \leq 1$.
\end{definition}

\vspace{.3cm}
Now we propose our first main result as follows:

\begin{theorem}\label{theorem1}
Let $(X,d)$ be a metric space,  $f: X \to X$ and $T : X \to
CB(X)$.  If the hybrid pair $(f,T)$ satisfies generalized
 $(F,\varphi)$-contraction condition $(\ref{equation1})$,
and also enjoys the $CLR_{f}$ property, then the mappings $f$ and $T$ have a
coincidence point.

Moreover, if the hybrid pair $(f,T)$ is occasionally coincidentally
idempotent, then the pair $(f,T)$ has a common fixed point.
\end{theorem}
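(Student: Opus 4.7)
The plan is to first exploit the $CLR_f$ property to extract a sequence $\{x_n\}\subset X$, a point $u\in X$, and a set $A\in CB(X)$ with $fx_n\to fu\in A$ and $\mathcal{H}(Tx_n,A)\to 0$. I would then establish coincidence $fu\in Tu$ in a first stage, and use occasional coincidental idempotence to promote this to a common fixed point in a second stage.

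For the coincidence stage, I substitute $x=x_n$, $y=u$ in (\ref{equation1}). Using $fu\in A$ and Hausdorff convergence, one checks $d(fx_n,Tx_n)\to 0$, $d(fu,Tx_n)\to 0$, $d(fu,fx_n)\to 0$, $d(fx_n,Tu)\to d(fu,Tu)$, and $\mathcal{H}(Tx_n,Tu)\to\mathcal{H}(A,Tu)$. Consequently, each of the seven candidates in the $\max$ defining the argument of $\varphi$ tends to $0$ or to a quantity bounded by $d^p(fu,Tu)$, with the $d^p(fy,Ty)$ term contributing exactly $d^p(fu,Tu)$ in the limit; the three rational terms vanish. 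If $d(fu,Tu)=0$ we are done, so assume $d(fu,Tu)>0$. Then $\mathcal{H}(A,Tu)\geq d(fu,Tu)>0$ guarantees that (\ref{equation1}) is applicable for large $n$, and passing to the $\limsup$ with continuity of $F$ together with monotonicity and right upper semicontinuity of $\varphi$ yields
\[
\tau+F(\mathcal{H}^p(A,Tu))\leq F\bigl(\varphi(d^p(fu,Tu))\bigr)<F(d^p(fu,Tu))\leq F(\mathcal{H}^p(A,Tu)),
\]
since $\varphi(t)<t$ for $t>0$ and $F$ is strictly increasing. This contradicts $\tau>0$, forcing $fu\in Tu$ by closedness of $Tu$.

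For the common fixed point, occasional coincidental idempotence yields $v\in\mathcal{C}(f,T)$ with $ffv=fv$. Setting $w:=fv$, one has $fw=w$ and $w\in Tv$, so $d(w,Tv)=0$ and $d(fw,fv)=0$. Substituting $x=v$, $y=w$ in (\ref{equation1}), a direct computation shows that each of the seven terms inside the $\max$ equals either $0$, $\tfrac{1}{2}d^p(w,Tw)$, or $d^p(w,Tw)$; hence the $\max$ equals $d^p(w,Tw)$. Once more $w\in Tv$ gives $d(w,Tw)\leq\mathcal{H}(Tv,Tw)$, and if $d(w,Tw)>0$ the same three-step inequality chain produces $\tau\leq 0$, a contradiction. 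Therefore $w\in Tw$, and combined with $fw=w$, $w$ is a common fixed point.

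The main obstacle is the cautious limit argument that secures the strict inequality $F(\varphi(d^p(fu,Tu)))<F(d^p(fu,Tu))$: since $\varphi$ is only \emph{right} upper semicontinuous (but increasing), one must verify that $\limsup_n \varphi(M(x_n,u))\leq\varphi(d^p(fu,Tu))$ irrespective of whether $M(x_n,u)$ approaches its limit from above or below, and then transport the strict inequality $\varphi(t)<t$ through the continuous, strictly increasing function $F$. A secondary care point is that the hypothesis $\mathcal{H}(Tx,Ty)>0$ in (\ref{equation1}) forces separate treatment of the degenerate cases $\mathcal{H}(Tx_n,Tu)=0$ and $\mathcal{H}(Tv,Tw)=0$, in which the desired conclusion is immediate.
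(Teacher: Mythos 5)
Your proposal is correct and follows essentially the same route as the paper: the $CLR_f$ sequence with $x=x_n$, $y=u$ in (\ref{equation1}) yields the coincidence point by contradiction, and a second application of (\ref{equation1}) at an occasionally coincidentally idempotent point $v$ (you take $x=v$, $y=fv$ and contradict $d(fv,Tfv)>0$; the paper takes $x=fv$, $y=v$ and contradicts $Tfv\neq Tv$) gives the common fixed point. Your extra care with the degenerate cases $\mathcal{H}(Tx_n,Tu)=0$, $\mathcal{H}(Tv,Tfv)=0$ and with passing the limit through the merely right-upper-semicontinuous $\varphi$ only tightens steps the paper treats silently.
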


\begin{proof} Since the pair $(f,T)$ enjoys the $CLR_f$ property, there exists a sequence $\{x_{n}\}$ in $X$ such that
$$\lim_{n\rightarrow\infty}fx_{n}=fu\in A=\lim_{n\rightarrow\infty}Tx_{n},$$
for some $u\in X$ and $A\in CB(X)$. We assert that $fu\in Tu$. If
not, then using condition \eqref{equation1}, we have
{\small \begin{eqnarray*}
\nonumber
\tau &+& F\big(\mathcal{H}^p(Tx_{n},Tu)\big)\\
&\leq& F\bigg( \varphi\bigg(\max \bigg\{\begin{array}{cc}
d^p(fx_{n},Tx_{n}),d^p(fu,Tu),d^p(fu, fx_{n}),
\frac{1}{2}\big[d^p(fx_{n},Tu)+d^p(fu,Tx_{n})\big],\\
\frac{d^p(fx_{n},Tx_{n})d^p(fu, Tu)}{1 + d^p(fu, fx_{n})},
\frac{d^p(fx_{n},Tu)d^p(fu, Tx_{n})}{1 + d^p(fu, fx_{n})},
 \frac{d^p(fx_{n},Tu)d^p(fu, Tx_{n})}{1 + d^p(Tx_{n}, Tu)}
\end{array}
 \bigg\} \bigg) \bigg).
\end{eqnarray*}}
Passing to the limit as $n\rightarrow\infty$, we have
\begin{align*}
\tau + F(&\mathcal{H}^p(A,Tu))\\
&\leq F\left( \varphi\left(\max\left\{ \begin{array}{cc}
d^p(fu,A),d^p(fu,Tu),0,\frac{1}{2}\big[d^p(fu,Tu)+d^p(fu,A)\big],\\
\frac{d^p(fu, A)d^p(fu, Tu)}{1 + d^p(fu, fu)}, \frac{d^p(fu, Tu)d^p(fu, A)}{1 + d^p(fu, fu)},
\frac{d^p(fu, Tu)d^p(fu, A)}{1 + d^p(A, Tu)}
\end{array}
 \right\} \right) \right)\\
&=F\left(\varphi\left(\max\left\{ \begin{array}{cc}
d^p(fu,A),d^p(fu,Tu),0,\frac12\big[d^p(fu,Tu)+d^p(fu,A)\big],\\
d^p(fu, A)d^p(fu, Tu), \frac{d^p(fu, A)d^p(fu, Tu)}{1 + d^p(A, Tu)}
\end{array}
 \right\} \right) \right).
\end{align*}
Using  $fu\in A, \tau>0$, ($F_1$) and property of $\Phi$, we have
\begin{eqnarray*}
\nonumber
\mathcal{H}^p(A,Tu) &\leq& \varphi\Big(\max\big\{0,d^p(fu,Tu),0,\frac{1}{2}[d^p(fu,Tu)+0],0,0\big\}\Big)\\
&=&\varphi\big(d^p(fu,Tu)\big)\\
&<&d^p(fu,Tu).
\end{eqnarray*}
Since $fu\in A$ the above inequality implies
$$d(fu,Tu) \leq \mathcal{H}(A,Tu)< d(fu,Tu),$$
a contradiction. Hence $fu\in Tu$ which shows that the
pair $(f,T)$ has a coincidence point ($i.e.,~
\mathcal{C}(f,T)\neq\emptyset$).

\vspace{.3cm}
Now, assume that the hybrid pair $(f,T)$ is occasionally
coincidentally idempotent. Then for some $v\in
\mathcal{C}(f,T), {\rm we~ have }~ffv=fv\in Tv$. Our claim is that
$Tv=Tfv$. If not, then using condition \eqref{equation1}, we get
{\small \begin{align*}
\tau + F(&\mathcal{H}^p(Tfv,Tv)) \\
&\leq F
\bigg( \varphi\bigg(\max \bigg\{\begin{array}{cc}
 d^p(ffv,Tfv), d^p(fv,Tv), d^p(fv, ffv), \frac{1}{2}\left[d^p(fv,Tfv)+d^p(ffv,Tv)\right],\\
\frac{d^p(ffv, Tfv)d^p(fv, Tv)}{1 + d^p(fv, ffv)}, \frac{d^p(fv, Tfv)d^p(ffv, Tv)}{1 + d^p(fv, ffv)},
\frac{d^p(fv, Tfv)d^p(ffv, Tv)}{1 + d^p(Tfv, Tv)}
  \end{array}
  \bigg\} \bigg) \bigg)\\
  &\\
&= F \bigg( \varphi\bigg(\max \bigg\{\begin{array}{cc}
d^p(fv,Tfv), d^p(fv,Tv),0,\frac{1}{2}\left[d^p(fv,Tfv)+d^p(fv,Tv)\right],\\
d^p(fv, Tfv)d^p(fv, Tv), d^p(fv, Tfv)d^p(fv, Tv),
\frac{d^p(fv, Tfv)d^p(fv, Tv)}{1 + d^p(Tfv, Tv)}
\end{array}
\bigg\} \bigg) \bigg).
\end{align*}}
Since $fv\in Tv$, the above inequality implies
\begin{align*}
\tau + F(\mathcal{H}^p(Tfv,Tv)) &\leq F \Big(\varphi\Big(\max\Big\{d^p(fv,Tfv),0,0,\frac{1}{2}d^p(fv,Tfv),0,0,0\Big\}\Big) \Big)\\
    &= F \Big(\varphi\big(d^p(Tfv,fv)\big)\Big).
\end{align*}
Using $(F_1)$ and property of $\Phi$, we get
$$d^p(Tfv,fv) < d^p(Tfv,fv),$$
which is a contradiction. Thus we have $fv=ffv\in Tv=Tfv$ which
shows that $fv$ is a common fixed point of the mappings $f$ and $T$.
\end{proof}

In view of $Remark$ \ref{remark1}, we have the following natural
result:

\begin{corollary}\label{corollary1}
Let $(X,d)$ be a metric space,  $f: X \to X$ and $T : X \to
CB(X)$.  If the hybrid pair $(f,T)$ satisfies generalized
 $(F,\varphi)$-contraction condition $(\ref{equation1})$, and enjoys the (E.A) property along with the closedness of
$f(X)$, then the mappings $f$ and $T$ have a coincidence point.

Moreover, if the hybrid pair $(f,T)$ is occasionally coincidentally
idempotent, then the pair $(f,T)$ has a common fixed point.
\end{corollary}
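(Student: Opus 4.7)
The plan is to reduce Corollary \ref{corollary1} to Theorem \ref{theorem1} by showing that the hypothesis of (E.A) property together with closedness of $f(X)$ is strong enough to deliver the $CLR_f$ property, which is precisely the content of Remark \ref{remark1}. Once this upgrade is made, both conclusions (existence of a coincidence point, and, under the occasionally coincidentally idempotent assumption, existence of a common fixed point) are immediate from Theorem \ref{theorem1}.

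Concretely, I would first unpack the (E.A) property to obtain a sequence $\{x_n\}\subset X$, a point $t\in X$, and a set $A\in CB(X)$ such that
\begin{equation*}
\lim_{n\to\infty} fx_n = t \in A = \lim_{n\to\infty} Tx_n.
\end{equation*}
Since each $fx_n$ lies in $f(X)$ and $f(X)$ is assumed to be closed in $X$, the limit $t$ must itself belong to $f(X)$. Hence there exists $u\in X$ with $t=fu$, and substituting this into the preceding display yields
\begin{equation*}
\lim_{n\to\infty} fx_n = fu \in A = \lim_{n\to\infty} Tx_n,
\end{equation*}
which is exactly the $CLR_f$ property for the hybrid pair $(f,T)$.

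With the $CLR_f$ property now established, the contractive condition \eqref{equation1} is still in force, so the hypotheses of Theorem \ref{theorem1} are fully met. Applying that theorem, the pair $(f,T)$ has a coincidence point, giving the first conclusion. If in addition $(f,T)$ is occasionally coincidentally idempotent, the second part of Theorem \ref{theorem1} supplies a common fixed point. Since the entire argument is a short syllogism chaining Remark \ref{remark1} into Theorem \ref{theorem1}, there is essentially no analytical obstacle; the only point requiring a moment of care is the logical step that closedness of $f(X)$ promotes the (E.A) limit $t$ to a point of the form $fu$, and this is a one-line topological observation rather than a substantive difficulty.
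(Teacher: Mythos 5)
Your proposal is correct and follows exactly the paper's route: the corollary is obtained by invoking Remark \ref{remark1} (closedness of $f(X)$ upgrades the (E.A) limit $t$ to a point $fu$, yielding the $CLR_f$ property) and then applying Theorem \ref{theorem1}. Your explicit verification of the remark is a welcome but non-essential elaboration of the same argument.
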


Notice that, a non-compatible hybrid pair always satisfies the property (E.A). Hence, we get the following corollary:

\begin{corollary}\label{corollary2}
Let $f$ be a self mapping on a metric space $(X,d)$, $T$ a
mapping from $X$ into $CB(X)$ satisfying generalized
 $(F,\varphi)$-contraction condition $(\ref{equation1})$. If the hybrid pair
$(f,T)$ is non-compatible and $f(X)$ a closed subset of $X$,
then the mappings $f$ and $T$ have a coincidence point.

Moreover, if the pair $(f,T)$ is occasionally coincidentally idempotent,
then the pair $(f,T)$ has a common fixed point.
\end{corollary}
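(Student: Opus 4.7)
The plan is to derive Corollary \ref{corollary2} directly from Corollary \ref{corollary1} by showing that non-compatibility of the pair $(f,T)$ automatically forces the property (E.A). Once that is established, the remaining hypotheses of Corollary \ref{corollary1}---namely, the generalized $(F,\varphi)$-contraction condition and the closedness of $f(X)$---are already part of the assumptions, so the conclusions of Corollary \ref{corollary2} reduce word-for-word to those of Corollary \ref{corollary1}.

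First I would unpack the definition of a non-compatible hybrid pair recorded in Definition \ref{D1}: the pair $(f,T)$ is non-compatible precisely when there exists at least one sequence $\{x_n\}\subset X$ with $Tx_n\to A\in CB(X)$ and $fx_n\to t\in A$, but $\lim_{n\to\infty}\mathcal{H}(Tfx_n, fTx_n)$ is either non-zero or fails to exist. The first clause of this definition is exactly the statement of property (E.A) from Definition \ref{D1}, while the asymmetry condition on $\mathcal{H}(Tfx_n, fTx_n)$ is an additional, one-sided requirement that plays no role here. Hence every non-compatible hybrid pair automatically enjoys the property (E.A); this is the one substantive observation in the proof, and in the excerpt it is already flagged by the authors in the sentence immediately preceding the statement.

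Second, with (E.A) in hand and $f(X)$ closed, I would invoke Remark \ref{remark1} to conclude that the pair $(f,T)$ also enjoys the $CLR_f$ property. All hypotheses of Theorem \ref{theorem1} are now in place, so an application of its first half produces a point $u\in X$ with $fu\in Tu$, proving the coincidence point assertion. For the second assertion, the occasionally coincidentally idempotent hypothesis combined with the just-obtained coincidence point feeds directly into the second half of Theorem \ref{theorem1}, yielding a common fixed point of $f$ and $T$.

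I do not expect any genuine obstacle: the entire argument is a short chain of logical implications, namely non-compatible $\Rightarrow$ (E.A), then (E.A) together with $f(X)$ closed $\Rightarrow$ $CLR_f$ via Remark \ref{remark1}, then Theorem \ref{theorem1}. The only point of care is to be explicit that the definition of non-compatibility is not a purely negative condition---it requires the existence of a particular sequence---so the implication to (E.A) is genuinely built into the definition rather than being a hidden step.
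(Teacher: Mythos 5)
Your argument is correct and is exactly the route the paper takes: the paper derives this corollary by observing that non-compatibility forces the property (E.A) (by definition, via the existence of the required sequence), and then applies Corollary \ref{corollary1}, i.e.\ Remark \ref{remark1} plus Theorem \ref{theorem1}. Nothing is missing.
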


If $F : \mathbb{R}^+ \to \mathbb{R}$ is defoned by $F(t) = \ln t$
and denoting $e^{-\tau}=k$, then we have the following corollary:

\begin{corollary}\label{C-1}
Let $(X,d)$ be a metric space, $f: X \to X$ and $T : X \to CB(X)$.
Assume that there exist $k\in(0,1)$, $\varphi \in \Phi$ such that
{\small \begin{align*}
\mathcal{H}^p(Tx,Ty) \leq k
\varphi\left(\max \left\{\begin{array}{cc}
d^p(f x, T x), d^p(f y, T y), d^p(f y, f x),\frac{1}{2}\big[d^p(f x,T y) + d^p(f y, T x)\big], \\
\frac{d^p(f x, T x)d^p(f y, T y)}{1 + d^p(f y, fx)}, \frac{d^p(f x,T y)d^p(f y, Tx)}{1 + d^p(f y, f x)},
\frac{d^p(f x,T y)d^p(f y,Tx)}{1 + d^p(Tx, Ty)}
  \end{array}
  \right\} \right)
\end{align*}}
for all $x,y\in X$ with $\mathcal{H}(Tx,Ty)>0,~ p\geq 1$,
and the hybrid pair $(f,T)$ enjoys the $CLR_f$. Then the mappings $f$
and $T$ have a coincidence point.

Moreover, if the hybrid pair $(f,T)$ is occasionally coincidentally
idempotent, then the pair $(f,T)$ has a common fixed point.
\end{corollary}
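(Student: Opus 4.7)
The plan is to derive Corollary~\ref{C-1} as an immediate specialization of Theorem~\ref{theorem1}, choosing $F(t)=\ln t$ and $k=e^{-\tau}$, so that the substantive analytic work has already been carried out inside Theorem~\ref{theorem1} and the present task reduces to a clean translation between the $(F,\tau)$ and $(k,\varphi)$ formulations.

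First I would verify that $F(t)=\ln t$ belongs to the class $\mathcal{F}$. Continuity and strict monotonicity of $\ln$ on $(0,\infty)$ give (F$_1$); the equivalence $\beta_n\to 0^+\iff \ln\beta_n\to -\infty$ gives (F$_2$); and $\lim_{\beta\to 0^+}\beta^k\ln\beta=0$ for every $k\in(0,1)$ gives (F$_3$). This is precisely the example recorded in the paper immediately after the class $\mathcal{F}$ is introduced, so no new argument is needed here.

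Next I would match hypotheses. Denote the maximum expression appearing on the right-hand side of \eqref{equation1} by $\mathcal{M}(x,y)$. Fix $x,y\in X$ with $\mathcal{H}(Tx,Ty)>0$ and apply $\ln$ to the assumed inequality $\mathcal{H}^p(Tx,Ty)\le k\,\varphi(\mathcal{M}(x,y))$. Since $\ln$ is strictly increasing, this is equivalent to
\[
\ln\bigl(\mathcal{H}^p(Tx,Ty)\bigr)\le \ln k+\ln\bigl(\varphi(\mathcal{M}(x,y))\bigr),
\]
i.e.\ $\tau+F(\mathcal{H}^p(Tx,Ty))\le F(\varphi(\mathcal{M}(x,y)))$ with $\tau:=-\ln k\in\mathbb{R}^+$, which is exactly \eqref{equation1}. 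The reverse direction uses the same step in the opposite sense, and the case $\mathcal{H}(Tx,Ty)=0$ is trivial on both sides. Hence $(f,T)$ satisfies the $(k,\varphi)$-condition of Corollary~\ref{C-1} if and only if it satisfies the generalized $(F,\varphi)$-contraction condition of Theorem~\ref{theorem1} for this particular $F$ and for $\tau=-\ln k>0$.

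Finally, with hypotheses matched and the $CLR_f$ property assumed, I would invoke Theorem~\ref{theorem1} to produce a coincidence point $u\in\mathcal{C}(f,T)$, and then invoke the second conclusion of Theorem~\ref{theorem1} under the occasionally coincidentally idempotent assumption to promote $fu$ to a common fixed point of $f$ and $T$. I anticipate no substantive obstacle: the only point demanding a moment's care is verifying that the correspondence $\tau\leftrightarrow -\ln k$ keeps $\tau$ in $\mathbb{R}^+$ precisely when $k\in(0,1)$, so that no information is lost in either direction of the equivalence.
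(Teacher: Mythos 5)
Your proposal is correct and matches the paper's own derivation: the corollary is obtained by specializing Theorem~\ref{theorem1} to $F(t)=\ln t$ with $k=e^{-\tau}$ (equivalently $\tau=-\ln k>0$), the logarithm converting the $(k,\varphi)$-inequality into condition \eqref{equation1}. Your additional checks that $\ln\in\mathcal{F}$ and that $k\in(0,1)$ corresponds exactly to $\tau\in\mathbb{R}^+$ simply make explicit what the paper leaves implicit.
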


\vspace{.4cm}
Now, we present our second main result as follows:

\vspace{.3cm}
\begin{theorem}\label{theorem2}
Let $(X,d)$ be a metric space, $f: X \to X$ and $T : X \to CB(X)$.
If the hybrid pair $(f,T)$ satisfies a Rational Hardy-Rogers $(F,\varphi)$-contraction
condition $(\ref{equation2})$ and also enjoys the $CLR_{f}$ property, then the mappings $f$ and $T$ have a coincidence point.

Moreover, if the hybrid pair $(f,T)$ is occasionally coincidentally
idempotent, then the pair $(f,T)$ has a common fixed point.
\end{theorem}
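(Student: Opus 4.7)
The plan is to follow the same two-step structure as the proof of Theorem 2.1, merely replacing the seven-term maximum inside $\varphi$ by the four-term Rational Hardy-Rogers expression of (2.2). First, I would use the $CLR_f$ property to extract a sequence $\{x_n\}\subset X$ together with $u\in X$ and $A\in CB(X)$ satisfying $fx_n\to fu\in A$ and $Tx_n\to A$ in the Hausdorff-Pompeiu sense. Assuming for contradiction that $fu\notin Tu$, I would substitute $x=x_n,\ y=u$ into the contractive inequality (2.2).

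The computational heart of the first step is the limit as $n\to\infty$ of the four ingredients inside $\varphi$. Using $fx_n\to fu$, $fu\in A$ and $\mathcal{H}(Tx_n,A)\to 0$ one sees that $d(fx_n,Tx_n)\to 0$, $d(fu,Tx_n)\to 0$, $d(fx_n,fu)\to 0$ and $d(fx_n,Tu)\to d(fu,Tu)$. The argument of $\varphi$ therefore collapses in the limit to $(\beta+\gamma+\delta)\,d^p(fu,Tu)$, where the bound $\beta+\gamma+\delta\le 1$ follows from $\alpha+\beta+2\gamma+2\delta\le 1$. Continuity of $F$ combined with upper semicontinuity of $\varphi$ from the right, together with $\tau>0$, $F$ strictly increasing and $\varphi(t)<t$, then yields $\mathcal{H}^p(A,Tu)<(\beta+\gamma+\delta)\,d^p(fu,Tu)\le d^p(fu,Tu)$. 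Since $fu\in A$ forces $d(fu,Tu)\le\mathcal{H}(A,Tu)$, this is the desired contradiction, so $fu\in Tu$ and $u$ is a coincidence point.

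For the common fixed point claim I would pick, by the occasionally coincidentally idempotent hypothesis, a $v\in\mathcal{C}(f,T)$ with $ffv=fv$ and aim to show $Tv=Tfv$. Applying (2.2) with $x=fv$, $y=v$ under the assumption $Tv\ne Tfv$: the identity $ffv=fv$ kills the $\alpha$-term; the $\beta$-term vanishes because $fv\in Tv$ gives $d^p(fv,Tv)=0$; and the $\gamma$- and $\delta$-terms collapse to $(\gamma+\delta)\,d^p(fv,Tfv)$. Since $\gamma+\delta\le 1/2<1$, the same logic with $\tau>0$, $\varphi(t)<t$ and $d(fv,Tfv)\le\mathcal{H}(Tv,Tfv)$ forces $d^p(fv,Tfv)<d^p(fv,Tfv)$, a contradiction. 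Hence $Tv=Tfv$ and $fv=ffv\in Tfv=Tv$ is a common fixed point.

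The delicate point I expect is the interchange of limit with the composition $F\circ\varphi$ in step one: continuity of $F$ and upper semicontinuity of $\varphi$ from the right must combine with the strict additive gap $\tau$ to salvage a strict inequality after passage to the limit. Once this is handled, the remainder is algebraic simplification of the Rational Hardy-Rogers expression, together with the estimate $d(fu,Tu)\le\mathcal{H}(A,Tu)$ coming from $fu\in A$, exactly mirroring the bookkeeping already carried out in Theorem 2.1.
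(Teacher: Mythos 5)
Your proposal is correct and follows essentially the same route as the paper: extract the $CLR_f$ sequence, let $n\to\infty$ in \eqref{equation2} so that the right-hand side collapses to $(\beta+\gamma+\delta)\,d^p(fu,Tu)$, derive the contradiction via $fu\in A$ and $\varphi(t)<t$, and then handle the occasionally coincidentally idempotent case by substituting $x=fv$, $y=v$ to reduce the bound to $(\gamma+\delta)\,d^p(fv,Tfv)$. The only (cosmetic) difference is that you make the limit interchange through $F\circ\varphi$ explicit, which the paper passes over silently.
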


\begin{proof}
As the pair $(f,T)$ shares the $CLR_{f}$ property, there exists a sequence $\{x_{n}\}$ in $X$ such that
\begin{equation*}
  \lim_{n\rightarrow\infty}fx_{n}=fu\in A=\lim_{n\rightarrow\infty}Tx_{n},
\end{equation*}
for some $u\in X$ and $A\in CB(X)$. We assert that $fu\in Tu$. If
not, then using condition \eqref{equation2}, we have
{\small\begin{align*}
\tau +  F(\mathcal{H}^p(Tx_{n},Tu))\leq F
\left(\varphi\left(\begin{array}{cc}
 \alpha \, d^p(fx_{n},fu) +
\frac{ { \beta} [1+d^p(fx_{n},Tx_{n})]d^p(fu,Tu) }{1+ d^p(fx_{n},fu)}\\
+\gamma [d^p(fx_{n},Tx_{n}) +  d^p(fu,Tu)] + \delta [ d^p(fx_{n},Tu)+ d^p(fu,Tx_{n})]
\end{array}
 \right) \right).
\end{align*}}
Passing to the limit as $n\rightarrow\infty$ in the above inequality,
we obtain
$$
\tau + F(\mathcal{H}^p(A,Tu))\leq F(\varphi(\beta+\gamma+\delta)d^p(fu,Tu)),
$$
Using $\tau>0$ and ($F_1$) and property of $\Phi$, it follows that
$$
d^p(fu,Tu)\leq d^p(A,Tu)<(\beta+\gamma+\delta)d^p(fu,Tu),
$$
a contradiction, as $\beta+\gamma+\delta\leq1$.
Hence, $fu\in Tu$ which shows that the hybrid pair $(f,T)$ has a coincidence
point ($i.e., ~\mathcal{C}(f,T)\neq\emptyset$).

\vspace{.3cm}
Now, if the mappings $f$ and $T$ are occasionally
coincidentally idempotent, then there exists $v\in \mathcal{C}(f,T)$ such that $ffv=fv\in Tv$.
Our claim is that $fu$ is the common fixed point of $f$ and $T$. It is sufficient to show that
$Tv=Tfv$. If not, then using condition \eqref{equation2}, we have
{\small \begin{align*}
\tau + F(\mathcal{H}^p(Tfv,Tv)) &\leq F
\left(\varphi\left(\begin{array}{cc}
\alpha \, d^p(ffv,fv) + \frac{ { \beta} [1+ d^p(ffv,Tfv)]d^p(fv,Tv) }{1+ d^p(ffv,fv)}\\
+ \gamma [ d^p(ffv,Tfv) +  d^p(fv,Tv)] + \delta [ d^p(ffv,Tv)+ d^p(fv,Tfv)]
\end{array}
 \right) \right)\\
&= F \left(\varphi\left(\begin{array}{cc}
\beta [1+ d^p(fv,Tfv)]d^p(fv,Tv) + \gamma [ d^p(fv,Tfv) + d^p(fv,Tv)]\\
+ \delta [ d^p(fv,Tv)+ d^p(fv,Tfv)]
\end{array}
 \right) \right).
\end{align*}}
Since $fv\in Tv$, the above inequality implies
\begin{align*}
\tau + F\big(d^p(Tfv,Tv)\big) &\leq F\big(\varphi(\gamma+\delta) d^p(fv,Tfv)\big).
\end{align*}
Using $(F_1)$ and property of $\Phi$, we can have
$$
d^p(Tfv,fv) < (\gamma+\delta) d^p(fv,Tfv),
$$
a contradiction, as $ \gamma+\delta \leq1 $. Thus, $fv=ffv\in Tv=Tfv$ which shows that $fv$ is a common fixed
point of the mappings $f$ and $T$.
\end{proof}

\vspace{.3cm}
If $F : \mathbb{R}^+ \to \mathbb{R}$ is given by $F(t) = \ln t$
and denoting $e^{-\tau}=k$, then we have the following corollary:
\begin{corollary}\label{C-2}
Let $(X,d)$ be a metric space,  $f: X \to X$ and $T : X \to
CB(X)$. Suppose that there exist $k\in(0,1)$, $\varphi \in \Phi$ such that
\begin{align*}
\mathcal{H}^p(Tx,Ty)  & \leq
k \varphi\left(\begin{array}{cc}
{ \alpha} d^p(fx,fy) + \frac{ { \beta} [1+d^p(fx,Tx)]d^p(fy,Ty) }{1+ d^p(fx,fy)}+ {\gamma} [d^p(fx,Tx)+ d^p(fy,Ty)]\\
+\delta [{d^p(fx,Ty)} + d^p(fy,Tx)]
  \end{array}
\right)
\end{align*}
for all $x, y \in X$  with $Tx \neq Ty $, where  $p\geq 1,~\alpha, \beta,
\gamma, \delta \geq 0$, $\alpha + \beta + 2\gamma + 2\delta \leq 1$, and the hybrid pair $(f,T)$ enjoys
the $CLR_{f}$.
Then the mappings $f$ and $T$ have a coincidence point.

Moreover, if the pair $(f,T)$ is occasionally coincidentally
idempotent, then the pair $(f,T)$ has a common fixed point.
\end{corollary}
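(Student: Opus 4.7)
The plan is to derive Corollary \ref{C-2} as a direct specialization of Theorem \ref{theorem2} by choosing the specific function $F(t) = \ln t$ in place of the generic $F \in \mathcal{F}$. First, I would verify that this choice indeed lies in $\mathcal{F}$: continuity and strict monotonicity on $\mathbb{R}^+$ give $(F_1)$; $\ln \beta_n \to -\infty$ if and only if $\beta_n \to 0^+$ gives $(F_2)$; and the elementary limit $\lim_{\beta \to 0^+} \beta^k \ln \beta = 0$ for every $k \in (0,1)$ gives $(F_3)$. This verification was already noted in the example following the definition of $F$-contraction, so it is immediate.

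Next, I would translate the hypothesis of Corollary \ref{C-2} into the form required by Theorem \ref{theorem2}. Set $\tau = -\ln k$, which is a positive real number since $k \in (0,1)$. For any $x,y \in X$ with $Tx \neq Ty$ (equivalently $\mathcal{H}(Tx,Ty) > 0$, since $Tx, Ty$ are closed and bounded), applying $\ln$ to both sides of the assumed inequality
\[
\mathcal{H}^p(Tx,Ty) \leq k\, \varphi(N(x,y)),
\]
where $N(x,y)$ denotes the Rational Hardy-Rogers expression in braces, yields
\[
\ln \mathcal{H}^p(Tx,Ty) \leq \ln k + \ln \varphi(N(x,y)),
\]
which rearranges precisely to $\tau + F(\mathcal{H}^p(Tx,Ty)) \leq F(\varphi(N(x,y)))$. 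This is condition (\ref{equation2}) with the chosen $F$.

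Thus the hybrid pair $(f,T)$ is a Rational Hardy-Rogers $(F,\varphi)$-contraction in the sense of Definition \ref{MD2}, with the same constants $\alpha, \beta, \gamma, \delta$ satisfying $\alpha + \beta + 2\gamma + 2\delta \leq 1$, and the assumed $CLR_f$ property carries over unchanged. An application of Theorem \ref{theorem2} then produces a coincidence point of $f$ and $T$; moreover, if the pair is occasionally coincidentally idempotent, a common fixed point is obtained. Since Corollary \ref{C-2} is a specialization rather than a generalization, no real obstacle arises; the only point requiring mild care is making sure that the condition ``$Tx \neq Ty$'' and ``$\mathcal{H}(Tx,Ty) > 0$'' are interchangeable here, which follows at once because $Tx, Ty \in CB(X)$ and the Hausdorff-Pompeiu metric is a genuine metric on $CB(X)$.
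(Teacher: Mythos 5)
Your proof is correct and matches the paper's approach exactly: the paper obtains Corollary~\ref{C-2} from Theorem~\ref{theorem2} precisely by taking $F(t)=\ln t$ and $e^{-\tau}=k$, which is what you do (with the minor extra care of checking $\ln\in\mathcal{F}$ and the equivalence of $Tx\neq Ty$ with $\mathcal{H}(Tx,Ty)>0$). Nothing further is needed.
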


\vspace{.3cm}
\section{Illustrative Example}
 In this section, we provide an example to establish the genuineness of our extension.
\begin{example}\label{exmp}
  Let $X=[0,3]$ be a metric space equipped with the metric $d(x,y)=\vert
x-y\vert$. Define $f:X\rightarrow X$ and $T:X\rightarrow CB(X)$
as follows:
\begin{equation*}
fx=\begin{cases}
               3-x, & \hbox{if $x\in[0,2]$,} \\
               3, & \hbox{if $x\in(2,3]$;}
    \end{cases}
\quad
Tx=\begin{cases}
               \left[1,2\right], & \hbox{if $x\in[0,2]$,} \\
               \left[0,\frac{1}{2}\right], & \hbox{if $x\in(2,3]$.}
    \end{cases}
\end{equation*}
\end{example}
Let $F : \mathbb{R}^+ \to \mathbb{R}$ such that $F(t)=\ln(t)$, $\varphi : \mathbb{R}^+ \to \mathbb{R}^+$ such that $\varphi(t)=\frac{9}{10}t$ and $\tau=\frac{1}{5}>0$ and $p\geq 1$. Then it is easy to verify that
\begin{itemize}
  \item $F\in \mathcal{F}; ~\varphi \in \Phi; ~f(X)=[1,3]\cup\{3\}$, a closed set in $X;~\mathcal{C}(f,F)=[1,2]$;
  \item the hybrid pair $(f,T)$ satisfies $CLR_f$ property, as for the sequence $\{1+\frac{1}{n}\}_{n\in\mathbb{N}},$
      $$ \displaystyle\lim_{n\to\infty}f{x_{n}}=\lim_{n\to\infty}({2-\frac{1}{n}})
      =2=f1\in[1,2]=\lim_{n\to\infty}T({1+\frac{1}{n}});$$
\item $(f,T)$ is not coincidentally idempotent because $ff1=f2=1\ne 2=f1$;
\item $(f,T)$ is occasionally coincidentally idempotent, because $ff\frac{3}{2}=f\frac{3}{2}=\frac{3}{2}$;
\end{itemize}

\vspace{.3cm}
Now, in order to verify condition (\ref{equation1}), we distinguish two cases:

\vspace{.3cm}
\noindent Case I: If $x\in [0,2] ~{\rm and}~ y\in (2,3]$, then
\begin{eqnarray*}
\nonumber
\mathcal{H}(Tx,Ty) = \mathcal{H}\Big([1,2],\big[0,\frac{1}{2}\big]\Big)&=&max\Big\{d\big([1,2],\big[0,\frac{1}{2}\big]\big),~d\big([0,\frac{1}{2}],[1,2]\big)\Big\} \\
 &=&  max\Big\{\frac{3}{2}, 1\Big\}=\frac{3}{2},
\end{eqnarray*}
and $d(fy,Ty)=d\big(3,\big[0,\frac{1}{2}\big]\big)=\frac{5}{2}.$
Therefore, for all $p\geq 1$, we get
$$\mathcal{H}^{p}(Tx,Ty) = \Big(\frac{3}{2}\Big)^{p} < e^{-\frac{1}{5}}\frac{9}{10}\Big({\frac{5}{2}}\Big)^{p} =e^{-\frac{1}{5}}\Big(\frac{9}{10}d^{p}(fy,Ty)\Big)=e^{-\tau}\varphi\big(d^{p}(fy,Ty)\big).$$
Taking the logarithms of the both side along with $F(t)=ln(t)$, we get
$$\tau+F\big(\mathcal{H}^{p}(Tx,Ty)\big)<F\big(\varphi\big(d^{p}(fy,Ty)\big)\big).$$

\noindent Case II: If $x\in (2,3] ~{\rm and}~ y\in [1,2]$, then
$$\mathcal{H}(Tx,Ty) =\mathcal{H}\Big(\big[0,\frac{1}{2}\big],[1,2]\Big) = \frac{3}{2}~ {\text {and}}~ d(fx,Tx)=d\big(3,\big[0,\frac{1}{2}\big]\big)=\frac{5}{2}.$$
Therefore, for all $p\geq 1$, we get
$$\mathcal{H}^{p}(Tx,Ty) = \Big(\frac{3}{2}\Big)^{p} < e^{-\frac{1}{5}}\frac{9}{10}\Big({\frac{5}{2}}\Big)^{p} =e^{-\frac{1}{5}}\Big(\frac{9}{10}d^{p}(fx,Tx)\Big)=e^{-\tau}\varphi\big(d^{p}(fx,Tx)\big).$$
Taking the logarithm of both side of the above inequality and using $ln(t)=F(t)$, we get
$$\tau+F\big(\mathcal{H}^{p}(Tx,Ty)\big)<F\big(\varphi\big(d^{p}(fx,Tx)\big)\big).$$

\vspace{.2cm}\noindent
Notice that for $x,y\in [1,2]$ (or $x,y\in (2,3])~ \mathcal{H}(Tx,Ty)=0.$

\vspace{.2cm}
Thus, all the hypotheses of Theorem \ref{theorem1} are satisfied and the hybrid pair $(f,T)$ has the common fixed point (namely $\frac{3}{2}$).

\vspace{.3cm}
With a view to establish genuineness of our extension, notice that for $x=1, y=3$, we have
$$\mathcal{H}(Tx,Ty) = \frac{3}{2};~ d(fx,fy)=d(2,3)=1;$$
$$\frac{1}{2}\big[d(fx,Tx)+d(fy,Ty)\big]
=\frac{1}{2}\big[d(2,[1,2])+d(3,\big[0,\frac{1}{2}\big])\big]
=\frac{1}{2}(0+\frac{5}{2})=\frac{5}{4}~{\rm and}$$
$$\frac{1}{2}\big[d(fx,Ty)+d(fy,Tx)\big]
=\frac{1}{2}\big[d(2,\big[0,\frac{1}{2}\big])+d(3,[1,2])\big]
=\frac{1}{2}(\frac{3}{2}+1)=\frac{5}{4},$$

\vspace{.3cm}\noindent
which shows that the contractive condition of Theorem 11 (due to Kadelburg et al. \cite{KCI})
is not satisfied. Thus, in all our Theorem \ref{theorem1} is applicable to the present example while
Theorem 11 of Kadelburg et al. \cite{KCI} is not which substantiates the utility of Theorem \ref{theorem1}.

\vspace{.5cm}
\section{Applications}
As applications of our main results, we prove an existence theorem on bounded solutions of
a system of functional equations. Also, an existence theorem on the solution of integral inclusion is proved.

\vspace{.4cm}
\subsection{Application To Dynamic Programming}
In 1978,  Bellman and Lee \cite{Bell1}
first studied the existence of solutions for
functional equations wherein authors notice that the basic form of
functional equations in dynamic programming can be described as follows:
$$
q(x)=\sup_{y\in D}\{G(x,y,q(\tau (x,y)))\},\quad x\in W,
$$
where $\tau :W\times D\rightarrow W$, $G:W\times D\times
\mathbb{R} \rightarrow \mathbb{R}$ are mappings, while $W\subseteq
U$ is a state space, $D\subseteq V$ is a decision space, and $U$,
$V$ are Banach spaces.

\vspace{.3cm}
In $1984$, Bhakta and Mitra \cite{BM} obtained some existence
theorems for the following functional equation which arises in
multistage decision process related to dynamic programming
\begin{equation*}
q(x)=\sup_{y\in D}\big\{g(x,y)+G(x,y,q(\tau(x,y)))\big\},~ x\in W,
\end{equation*}
where $\tau :W\times D\rightarrow W$, $g:W\times D\rightarrow
\mathbb{R}$, $G:W\times D\times  \mathbb{R} \rightarrow
\mathbb{R}$ are mappings, while $W\subseteq U$ is a state space,
$D\subseteq V$ is a decision space, and $U$, $V$ are Banach
spaces.

\vspace{.2cm}
In recent years, a lot of work have been done in this direction wherein a multitude of
existence and uniqueness results have been obtained for solutions
and common solutions of some functional equations, including
systems of functional equations in dynamic programming using suitable fixed point results.
For more details one can consults \cite{LIU1,LIU2,HKP1,HKP2,HKP3,HKP4} and the references therein.

\vspace{.3cm}
Consider now a multistage process, reduced to the system of
functional equations
\begin{equation}\label{fe}
q_{i}(x)=\sup_{y\in D}\big\{g(x,y)+G_{i}(x,y,q_{i}(\tau(x,y)))\big\}, ~x\in W,~ i \in \{1,2\},
\end{equation}
where $\tau :W\times D\rightarrow W$, $g:W\times D\rightarrow
\mathbb{R}$, $G_{i}:W\times D\times  \mathbb{R} \rightarrow
\mathbb{R}$ are given mappings, while $W\subseteq U$ is a state
space, $D\subseteq V$ is a decision space, and $U$, $V$ are Banach
spaces. The purpose of this section is to prove the
existence of solutions for a system of functional equations
\eqref{fe} using Theorem~\ref{theorem1}.

\vspace{.3cm}
Let $B(W)$ be the set of all bounded real-valued functions on $W$.
 For an arbitrary $h\in B(W)$ define $\left\Vert h\right\Vert
=\displaystyle \sup_{x\in W}\left\vert h(x)\right\vert $, with
respective metric $d$. Also, $(B(W),\left\Vert \cdot \right\Vert )$ is a
Banach space wherein convergence is uniform. Therefore,
if we consider a Cauchy sequence $\{h_n \}$ in $B(W)$, then the
sequence $\{h_n \}$ converges uniformly to a function, say $h^*$,
so that $h^* \in B(W)$.

\vspace{.3cm}
We consider the operators $T_{i}: B(W) \rightarrow B(W)$
given by
\begin{equation}
T_{i}h_{i}(x)=\sup_{y\in D}\Big\{g(x,y)+G_{i}(x,y,h_{i}(\tau (x,y)))\Big\},
\label{3.3}
\end{equation}
for $h_{i}\in B(W)$, $x \in W$, for $i=1,2$; these mappings
are  well-defined if the functions $g$ and $G_{i}$ are bounded.
Also, denote
\begin{align*}
\Theta(h,k) =
\max \left\{\begin{array}{cc}
d(T_2h,T_2k),d(T_2h,T_1h),d(T_2k,T_1k), \frac{d(T_1h,T_2k)+d(T_1k,T_2h)}{2},\\
\frac{d(T_1h,T_2 h)d(T_1k,T_2k)}{1 + d(T_2 k, T_2 h)}, \frac{d(T_1 h, T_2 k)d(T_1 k, T_2h)}{1 + d(T_2 k, T_2 h)},
\frac{d(T_1 h, T_2 k)d(T_1 k,T_2 h)}{1 + d(T_1h, T_1k)}
  \end{array}
  \right\},
\end{align*}
for $h,k\in B(W)$.

\vspace{.3cm}
\begin{theorem}
\label{TA1} Let $T_i:B(W) \rightarrow B(W)$ be given by
\eqref{3.3}, for $i = 1,2$. Suppose that the following
hypotheses hold:
\begin{itemize}
\item[(1)] there exist $\tau \in \mathbb{R}^{+}$ and $\varphi \in \Phi$ such that
$$
\left\vert G_{1}(x,y,h(x))-G_{2}(x,y,k(x))\right\vert
 \leq  e^{-\tau} \varphi (\Theta(h,k))
$$
for all $x \in W$, $y \in D$; \item[(2)] $g: W \times D
\rightarrow \mathbb{R}$ and $G_i: W \times D \times \mathbb{R}
\rightarrow \mathbb{R}$ are bounded functions, for $i =1,2$;
\item[(3)] there exists a sequence $\{h_{n}\}$ in $B(W)$
and a function $h^* \in B(W)$ such that
\begin{equation*}
  \lim_{n\to\infty}T_1h_{n}=\lim_{n\to\infty}T_2h_{n}=T_1h^*;
\end{equation*}
\item[(4)] $T_1T_1h=T_1h$, whenever $T_1h=T_2h$, for some $h\in
B(W)$.
\end{itemize}
Then the system of functional equations \eqref{fe} has a bounded
solution.
\end{theorem}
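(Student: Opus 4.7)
The plan is to recast the system~(\ref{fe}) as a common fixed point problem for a hybrid pair on the complete metric space $X := B(W)$ (equipped with the sup-norm metric $d$, under which $B(W)$ is a Banach space), and then invoke Theorem~\ref{theorem1}. Concretely, I would set $f := T_1$ and define the (singleton-valued) multi-valued map $\mathcal{T} : B(W) \to CB(B(W))$ by $\mathcal{T}(h) := \{T_2 h\}$. Hypothesis~(2) guarantees that $g$ and $G_i$ are bounded, so $T_1, T_2$ really do send $B(W)$ into itself, making $f$ and $\mathcal{T}$ well defined, and every image $\mathcal{T}(h)$ is trivially closed and bounded. A common fixed point of this pair, i.e.\ an $h^\star$ with $h^\star = f h^\star \in \mathcal{T} h^\star$, is precisely an $h^\star \in B(W)$ satisfying $h^\star = T_1 h^\star = T_2 h^\star$, which is a bounded common solution of both equations of the system.

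The $CLR_f$ property is supplied directly by hypothesis~(3): the sequence $\{h_n\}$ satisfies $f h_n = T_1 h_n \to T_1 h^* = f h^*$ and $\mathcal{T} h_n = \{T_2 h_n\} \to \{T_1 h^*\} =: A$ in the Hausdorff metric, with $f h^* = T_1 h^* \in A \in CB(B(W))$. Occasional coincidental idempotence follows at once from hypothesis~(4): any coincidence point $h$ satisfies $T_1 h = T_2 h$, whence $ffh = T_1 T_1 h = T_1 h = fh$, so $f$ is idempotent at every (hence some) coincidence point of the pair.

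For the generalized $(F,\varphi)$-contraction~(\ref{equation1}), I would take $F(t) = \ln t$ and $p = 1$. Fix $h,k \in B(W)$ with $\mathcal{H}(\mathcal{T}h,\mathcal{T}k) > 0$ and fix $x \in W$; for an arbitrary $\varepsilon > 0$, invoke the standard ``$\varepsilon$-supremum'' selection in $D$ to pick $y_1, y_2$ approximating the suprema defining $T_1 h(x)$ and $T_2 k(x)$ up to $\varepsilon$, and subtract the two defining inequalities. Combining the result with hypothesis~(1) gives
\[
|T_1 h(x) - T_2 k(x)| \;\le\; \sup_{y \in D}\bigl|G_1(x,y,h(\tau(x,y))) - G_2(x,y,k(\tau(x,y)))\bigr| + \varepsilon \;\le\; e^{-\tau}\varphi(\Theta(h,k)) + \varepsilon.
\]
Letting $\varepsilon \downarrow 0$ and then taking $\sup_{x \in W}$ produces $d(T_1 h, T_2 k) \le e^{-\tau}\varphi(\Theta(h,k))$; applying $F = \ln$ converts this into the required $(F,\varphi)$-contraction once $\Theta(h,k)$ is matched against the maximum on the right-hand side of~(\ref{equation1}) under our identification of $f$ and $\mathcal{T}$. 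Theorem~\ref{theorem1} then yields a common fixed point $h^\star$, which is the desired bounded solution of~(\ref{fe}).

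The main obstacle is reconciling the bound actually provided by hypothesis~(1), which controls the ``cross'' quantity $d(T_1 h, T_2 k)$, with the bound actually demanded by Theorem~\ref{theorem1}, namely one on $\mathcal{H}(\mathcal{T}h,\mathcal{T}k) = d(T_2 h, T_2 k)$. This has to be handled by choosing the configuration of $f$ and $\mathcal{T}$ carefully---taking $f := T_2$ and $\mathcal{T}(h) := \{T_1 h\}$ makes the max in~(\ref{equation1}) coincide with $\Theta(h,k)$ term-for-term and is probably the cleaner formulation, at the cost of shifting the verification burden to the $CLR_f$ step (where one must check that the limit point from hypothesis~(3) lies in the range of the new $f$). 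Once this configuration issue is resolved, the rest of the verification is routine and Theorem~\ref{theorem1} delivers $h^\star \in B(W)$ with $h^\star = T_1 h^\star = T_2 h^\star$, completing the argument.
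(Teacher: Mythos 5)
Your proposal follows essentially the same route as the paper's proof: identify the pair $(T_1,T_2)$ with a hybrid pair $(f,\mathcal{T})$ where $\mathcal{T}$ is singleton-valued, read the $CLR_f$ property off hypothesis (3) and occasional coincidental idempotence off hypothesis (4), use the $\varepsilon$-supremum (the paper's $\lambda$) selection in $D$ together with hypothesis (1) to get $d(T_1h,T_2k)\le e^{-\tau}\varphi(\Theta(h,k))$, take $F=\ln$ and $p=1$, and invoke Theorem~\ref{theorem1} to obtain $h^\star=T_1h^\star=T_2h^\star$, a bounded solution of \eqref{fe}. The ``configuration'' obstacle you flag at the end is real, but it is not resolved in the paper either: the published proof simply puts $f=T_1$, $T=T_2$ and asserts that \eqref{equation1} holds, even though the quantity actually estimated is the cross distance $d(T_1h_1,T_2h_2)$, whereas \eqref{equation1} asks for a bound on $\mathcal{H}(\mathcal{T}h_1,\mathcal{T}h_2)$, i.e.\ on $d(T_2h_1,T_2h_2)$ (or $d(T_1h_1,T_1h_2)$ under your swapped assignment); moreover, the expression $\Theta$ is written so as to match the maximum in \eqref{equation1} term-for-term only under the assignment $f=T_2$, $\mathcal{T}h=\{T_1h\}$, which supports your suggested swap but still does not make the left-hand sides literally agree. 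So your attempt reproduces the paper's argument and is in fact more candid about its weak point; a fully rigorous version would need hypothesis (1) (or the contraction it is meant to induce) restated so that the controlled quantity is genuinely $\mathcal{H}(\mathcal{T}h,\mathcal{T}k)$ for the chosen $\mathcal{T}$.
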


\begin{proof}
By hypothesis (3), the pair $(T_1,T_2)$ shares the common limit
range property with respect to $T_1$. Now, let $\lambda $ be an
arbitrary positive number, $x\in W$ and $h_{1},h_{2}\in B(W)$.
Then there exist $y_{1},y_{2}\in D$ such that
\begin{align}
T_1h_{1}(x) &<g(x,y_{1})+G_1(x,y_{1},h_{1}(\tau
(x,y_{1})))+\lambda ,
\label{3.4} \\
T_2h_{2}(x) &<g(x,y_{2})+G_2(x,y_{2},h_{2}(\tau
(x,y_{2})))+\lambda ,
\label{3.5} \\
T_1h_{1}(x) &\geq g(x,y_{2})+G_1(x,y_{2},h_{1}(\tau (x,y_{2}))),
\label{3.6} \\
T_2h_{2}(x) &\geq g(x,y_{1})+G_2(x,y_{1},h_{2}(\tau (x,y_{1}))).
\label{3.7}
\end{align}
Next, by using \eqref{3.4} and \eqref{3.7},  we obtain
\begin{align*}
T_1h_{1}(x)-T_2h_{2}(x) &<G_1(x,y_{1},h_{1}(\tau
(x,y_{1})))-G_2(x,y_{1},h_{2}(\tau (x,y_{1})))+\lambda \\
&\leq \left\vert G_1(x,y_{1},h_{1}(\tau
(x,y_{1})))-G_2(x,y_{1},h_{2}(\tau
(x,y_{1})))\right\vert +\lambda \\
&\leq   e^{-\tau} \varphi(\Theta(h_{1},h_{2}))+\lambda
\end{align*}
and so we have
\begin{equation}
T_1h_{1}(x)-T_2h_{2}(x) < e^{-\tau} \varphi(\Theta(h_{1},h_{2}))+\lambda.
\label{3.8}
\end{equation}
Analogously, by using \eqref{3.5} and \eqref{3.6},  we get
\begin{equation}
T_2h_{2}(x)-T_1h_{1}(x) < e^{-\tau} \varphi(\Theta(h_{1},h_{2}))+\lambda
\label{3.9}
\end{equation}
Combining \eqref{3.8} and \eqref{3.9},  we get
\begin{equation*}
\left\vert T_1h_{1}(x)-T_2h_{2}(x)\right\vert < e^{-\tau}
\varphi(\Theta(h_{1},h_{2}))+\lambda,  
\end{equation*}
implying thereby
\begin{equation*}
d(T_1h_{1},T_2h_{2}) \leq e^{-\tau} \varphi(\Theta(h_{1},h_{2}))+\lambda.
\end{equation*}
Notice that, the last inequality does not depend on $x\in W$ and
$\lambda >0$ is taken arbitrarily, therefore we obtain
\begin{equation*}
d(T_1h_{1},T_2h_{2})\leq e^{-\tau} \varphi(\Theta(h_{1},h_{2})).
\end{equation*}
By passing to logarithms, we can write
\begin{align*}
\tau + \ln(d(T_1h_{1},T_2h_{2})) & \leq \ln(\varphi(\Theta(h_{1},h_{2}))).
\end{align*}
If we consider $F\in\mathcal{F}$ defined by $F(t) = \ln t$, for
each $t\in(0,+\infty)$, and put $f=T_1$, $T=T_2$, then all the hypotheses
of Theorem~\ref{theorem1} are satisfied for the pair $(f, T)$ and $p=1$.
Moreover, in view of the hypotheses (4), the pair $(T_1,T_2)$ is
occasionally coincidentally idempotent, so by using Theorem~\ref{theorem1}, the mapping $T_1$ and $T_2$
have a common fixed point, that is, the system of functional
equations \eqref{fe} has a bounded solution.
\end{proof}

\vspace{.4cm}
\subsection{Application to Volterra integral inclusions}
Here, we present yet another application of Theorem
\ref{theorem2}. This application is essentially inspired by \cite{Turko1}.

\vspace{.3cm}
We establish new results on the existence of solutions of
integral inclusion of the type
\begin{equation}
x(t) \in q(t) + \int_{0}^{\sigma(t)}  k(t, s)F(s, x(s))\,ds
\label{I-1}
\end{equation}
for $t \in J=[0,1]\subset \mathbb{R} $, where $\sigma: J \to J$, $q: J \to E$, $k: J \times
J \to \mathbb{R}$ are continuous and $F: J \times E \to C(E)$,
where $E$ is a Banach space with norm $\|\cdot\|_E$ and $C(E)$
denotes the class of all nonempty closed subsets of $E$.

\vspace{.3cm}
Let $C(J, E)$ be the space of all continuous $E$-valued
functions on $J$. Define a norm $\|\cdot\|$ on $C(J, E)$ by
$$
\|x\| = \sup_{ t \in J} \|x(t) \|_E .
$$

\begin{definition}
  A continuous function $a \in C(J, E)$ is called a lower solution of the integral inclusion (\ref{I-1}),  if it satisfies
$$a(t) \leq q(t)+ \int_{0}^{\sigma(t)} k(t, s)v_1(s)ds, ~{\rm for ~all}~ v_1 \in B(J, E)$$
 such that $v_1(t) \in F(t, a(t)) ~{\rm almost ~everywhere~}(a.e.)~{\rm for}~ t  \in J$, where $B(J, E)$  is the space of all $E$-valued Bochner-integrable functions on $J$. Similarly, a continuous function $b\in C(J, E)
$ is called an upper solution of the integral inclusion (\ref{I-1}), if it satisfies
$$b(t) \geq q(t)+ \int_{0}^{\sigma(t)} k(t, s)v_2(s)ds, ~{\rm for ~all}~ v_2 \in B(J, E)$$
such that $v_2(t) \in F(t, b(t)) ~a.e.~{\rm for}~ t \in J.$
\end{definition}

Notice that, all the solution lies between lower solution $`a$' as well upper solution $`b$'. We can denote the solution set as an interval $[a,b].$

\begin{definition}
  A continuous function $x : J \to E$ is said to be a solution of the integral inclusion (\ref{I-1}), if
$$x(t) = q(t) + \int_{0}^{\sigma(t)} k(t,s)v(s)\,ds $$
for some $v \in B(J, E)$ satisfying $v(t) \in F(t, x(t))$, for
all $t \in J$.
\end{definition}

\vspace{.3cm}

In what follows, we also need the following definitions:

\begin{definition}
A multi-valued mapping $F : J \to 2^E$ is said to be measurable if for
any $y \in E$, the function $t \mapsto d(y, F(t)) = \inf \{\|y -
x\| : x \in F(t)\}$ is measurable.
\end{definition}

\begin{definition}
A multi-valued mapping $ \beta  : J \times E \to 2^E$ is called
Carath\'eodory if

(i) $t \mapsto  (t, x)$ is measurable for each $x \in E$, and

(ii) $x \mapsto  (t, x)$ is upper semicontinuous almost everywhere
for $t \in J$.
\end{definition}

Denote
$$
\| F(t, x) \| = \sup \{\|u \|_E : u \in F(t, x)\}.
$$

\begin{definition}
A Carath\'eodory multi-mapping $F(t, x)$ is called
$L^1$-Carath\'eodory if for every real number $r > 0$, there exists
a function $h_r \in L^1(J,\mathbb{R})$  such that
$$
\|F(t, x) \| \leq  h_r (t) \text{ for almost every } t \in J
$$
and for all $x \in E$ with $\|x \|_E \leq r$.
\end{definition}

Denote
$$
S^1_F(x) = \big\{v \in B(J, E) : v(t) \in F(t, x(t))  ~a.e.~ t \in J \big\}.
$$

\vspace{.3cm}
\begin{lemma}\cite{Lasota}
If $diam(E) < \infty$ and $F : J \times E \to 2^E$ is
$L^1$-Carath\'eodory, then $S^1_F (x) \neq \emptyset$ for each $x
\in C(J, E)$.
\end{lemma}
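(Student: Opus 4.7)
The plan is to produce a Bochner-integrable selector $v \in B(J,E)$ with $v(t) \in F(t,x(t))$ almost everywhere on $J$. This splits naturally into two tasks: a measurability step, providing a measurable selection, and an integrability step, controlling the norm of the selection. I would begin by fixing $x \in C(J,E)$ and introducing the auxiliary multi-function $\Gamma : J \to 2^{E}$ defined by $\Gamma(t) := F(t,x(t))$; showing $S^{1}_{F}(x) \neq \emptyset$ is then equivalent to producing a measurable selection of $\Gamma$ whose pointwise norm is integrable.

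For the measurability step I would combine the Carath\'eodory structure of $F$ (measurable in $t$, upper semicontinuous in $x$) with the continuity of $t \mapsto x(t)$ on the compact interval $J$. Approximating $x$ uniformly by simple functions $x_{n}$ (possible since $x$ is continuous on the compact $J$), one obtains that each $t \mapsto F(t,x_{n}(t))$ is measurable by the Carath\'eodory hypothesis, and upper semicontinuity in the second variable together with the closed-valuedness of $F$ is used to pass these measurability properties to the limit $\Gamma$. With $\Gamma$ measurable and closed-valued, and the finiteness hypothesis on $E$ ensuring the ambient space is Polish (so that Kuratowski--Ryll-Nardzewski applies), I would extract a measurable selector $v : J \to E$ with $v(t) \in \Gamma(t)$ for every $t \in J$.

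For the integrability step, since $x \in C(J,E)$ the number $r := \|x\|$ is finite, so the $L^{1}$-Carath\'eodory hypothesis supplies $h_{r} \in L^{1}(J,\mathbb{R})$ with $\|F(t,x(t))\| \leq h_{r}(t)$ almost everywhere. In particular $\|v(t)\|_{E} \leq h_{r}(t)$ a.e., which combined with measurability of $v$ (and separability in the range, again ensured by the hypothesis on $E$) gives that $v$ is Bochner-integrable, i.e.\ $v \in B(J,E)$. Therefore $v \in S^{1}_{F}(x)$ and $S^{1}_{F}(x) \neq \emptyset$.

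The main obstacle I expect is the measurability of $\Gamma$: upper semicontinuity in $x$ is strictly weaker than continuity, so measurability does not transport directly from $F$ to the composed map $t \mapsto F(t,x(t))$. The remedy is to use the standard equivalence, in a Polish-space setting, between weak measurability, graph measurability, and existence of Castaing representations for closed-valued multi-functions, together with the approximation of $x$ by simple functions sketched above; once $\Gamma$ is known to be measurable and closed-valued in a separable/finite-dimensional target, Kuratowski--Ryll-Nardzewski closes the argument and the rest is a routine application of the $L^{1}$-bound.
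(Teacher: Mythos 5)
The paper offers no proof of this lemma to compare against: it is imported verbatim from the cited literature (Lasota--Opial), with ``$diam(E)<\infty$'' almost certainly a misprint for $\dim(E)<\infty$, so your attempt has to stand on its own.

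As written it has a genuine gap at exactly the point you flag as the main obstacle. Approximating $x$ uniformly by simple functions $x_n$ does make each $t\mapsto F(t,x_n(t))$ measurable, but upper semicontinuity of $F(t,\cdot)$ only yields the inclusion $\limsup_{n} F(t,x_n(t))\subseteq F(t,x(t))$; it does not identify $F(t,x(t))$ as any kind of limit of the approximating multifunctions, so measurability of $\Gamma(t)=F(t,x(t))$ does not ``pass to the limit,'' and your appeal to Kuratowski--Ryll-Nardzewski then has no measurable multifunction to act on. Sup-measurability of maps that are merely measurable in $t$ and u.s.c.\ in $x$ is precisely the delicate issue, and the sketch never resolves it. The classical argument avoids proving measurability of $\Gamma$ altogether: one selects measurably $v_n(t)\in F(t,x_n(t))$ for the simple approximants, notes $\|v_n(t)\|_E\le h_r(t)$, uses finite-dimensionality of $E$ together with the Dunford--Pettis criterion to extract a weakly convergent subsequence $v_n\rightharpoonup v$ in $L^1$, and then uses Mazur's lemma plus closedness and convexity of the values and the upper semicontinuity in $x$ to conclude $v(t)\in F(t,x(t))$ a.e. Note that this needs convexity (and compactness) of the values, which belong to the standard hypotheses of the Lasota--Opial lemma even though they are suppressed in the paper's statement, and it uses the finiteness hypothesis for the weak-compactness step --- not, as you suggest, to make the target Polish (any separable Banach space is Polish, and a Banach space of finite diameter is trivial, which is why the hypothesis must be read as $\dim E<\infty$). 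Your integrability step via $h_r$ is fine, but only after a measurable selector of $t\mapsto F(t,x(t))$ has actually been produced, which is the part your argument does not deliver.
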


\vspace{.3cm}
\begin{lemma}\cite{Turko1}
Let $E$ be a Banach space, $F$ a Carath\'eodory multi-mappping with
$S^1_F \neq \emptyset $ and $\mathcal{L} : L^1(J, E) \to C(J,
E)$ a continuous linear mapping. Then the operator
$$\mathcal{L} \circ S^1_F: C(J, E) \to 2^{C(J,E)}$$  is a closed
graph operator on $C(J, E) \times C(J, E)$.
\end{lemma}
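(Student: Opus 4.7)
The plan is to verify the closed-graph property directly. Take convergent sequences $x_n \to x_*$ and $y_n \to y_*$ in $C(J,E)$ with $y_n \in (\mathcal{L}\circ S^1_F)(x_n)$, so that $y_n = \mathcal{L}(v_n)$ for some $v_n \in L^1(J,E)$ with $v_n(t) \in F(t,x_n(t))$ almost everywhere. The task is to exhibit $v_* \in L^1(J,E)$ such that $v_*(t) \in F(t,x_*(t))$ a.e.\ and $y_* = \mathcal{L}(v_*)$; this is exactly the statement that $(x_*,y_*)$ belongs to the graph of $\mathcal{L}\circ S^1_F$.

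First I would control the sequence $\{v_n\}$ in $L^1(J,E)$. Since $x_n \to x_*$ in $C(J,E)$, the family $\{x_n\}$ is uniformly bounded, say $\|x_n\| \leq r$, and the $L^1$-Carath\'eodory hypothesis on $F$ supplies $h_r \in L^1(J,\mathbb{R})$ with $\|v_n(t)\|_E \leq h_r(t)$ a.e.\ on $J$. Hence $\{v_n\}$ is norm-bounded and uniformly integrable, so by the Dunford--Pettis theorem (reducing to separable subspaces of $E$ if necessary) I may pass to a subsequence $v_{n_k} \rightharpoonup v_*$ weakly in $L^1(J,E)$. Because $\mathcal{L}$ is a continuous linear mapping, it is weakly continuous, so $\mathcal{L}(v_{n_k}) \rightharpoonup \mathcal{L}(v_*)$ in $C(J,E)$; combined with the strong convergence $y_{n_k} = \mathcal{L}(v_{n_k}) \to y_*$, this forces $y_* = \mathcal{L}(v_*)$.

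The remaining, and genuinely substantive, step is to verify $v_*(t) \in F(t,x_*(t))$ almost everywhere. I would invoke Mazur's lemma to produce convex combinations $w_k = \sum_{j\geq k} \lambda_j^k v_{n_j}$, with finitely many nonzero $\lambda_j^k \geq 0$ summing to $1$, such that $w_k \to v_*$ strongly in $L^1(J,E)$; after a further subsequence one has $w_k(t) \to v_*(t)$ for almost every $t$. Since $v_{n_j}(t) \in F(t,x_{n_j}(t))$, the multimapping $F(t,\cdot)$ is upper semicontinuous with closed (convex) values, and $x_{n_j}(t) \to x_*(t)$, a Kuratowski $\limsup$-type argument then yields $v_*(t) \in F(t,x_*(t))$ a.e.

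The main obstacle is precisely this last inclusion. Because only weak convergence of $\{v_n\}$ is available, convex-combination averaging via Mazur's lemma is unavoidable, and the passage from the almost-everywhere pointwise limit to membership in the set-valued target $F(t,x_*(t))$ rests essentially on the upper semicontinuity of $F(t,\cdot)$ together with its closed (and convex) values. The other steps — bounding $\{v_n\}$, extracting a weak limit, and transporting it through $\mathcal{L}$ — are standard soft-analytic moves once the Carath\'eodory hypothesis supplies an integrable majorant.
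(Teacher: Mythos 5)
The paper offers no proof of this lemma at all: it is quoted verbatim from T\"urko\v glu--Altun \cite{Turko1} (ultimately Lasota--Opial \cite{Lasota}), so the only fair comparison is with the standard argument, which is indeed the route you take (integrable majorant, weak compactness in $L^1$, Mazur's lemma, upper semicontinuity). Your outline is the right skeleton, but as written it has genuine gaps. First, the Dunford--Pettis step does not work in $L^1(J,E)$ for a general Banach space $E$: uniform integrability of $\{v_n\}$ (even granting the majorant $h_r$) gives relative weak compactness only when $E$ is finite-dimensional or reflexive, or when one additionally knows that the sets $\{v_n(t)\}$ are a.e.\ relatively weakly compact (Diestel--Ruess--Schachermayer/\"Ulger); ``reducing to separable subspaces'' does not repair this, since separability addresses measurability, not weak compactness. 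Since the lemma is stated for an arbitrary Banach space $E$, this is the step that would fail without importing extra hypotheses (in the original Lasota--Opial setting $E=\mathbb{R}^n$, which is why the classical proof goes through).

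Second, your final inclusion leans on hypotheses that are not in the statement. The Mazur averaging only ever lands $v_*(t)$ in the \emph{closed convex hull} of the limiting data: the convex combinations $w_k(t)$ are formed from points $v_{n_j}(t)\in F(t,x_{n_j}(t))$ lying in \emph{different} sets, so the correct argument is via an $\varepsilon$-enlargement, namely that upper semicontinuity gives $F(t,x_{n_j}(t))\subseteq F(t,x_*(t))+\varepsilon B$ for large $j$, whence $w_k(t)\in \operatorname{conv}F(t,x_*(t))+\varepsilon B$ and, letting $k\to\infty$ and $\varepsilon\to 0$, $v_*(t)\in \overline{\operatorname{conv}}\,F(t,x_*(t))$. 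Without convexity of the values of $F$ (which the lemma as stated does not assume -- it only posits closed values and the Carath\'eodory condition) you cannot conclude $v_*(t)\in F(t,x_*(t))$, and indeed the lemma is false without convex values; likewise the integrable bound $\|v_n(t)\|_E\le h_r(t)$ comes from the $L^1$-Carath\'eodory condition, which is also not among the stated hypotheses. You should either state explicitly that you are proving the lemma under the (tacit) Lasota--Opial hypotheses -- $F$ $L^1$-Carath\'eodory with closed \emph{convex} values and a setting in which bounded uniformly integrable sets are weakly compact in $L^1(J,E)$ -- or flag that the quoted statement is incomplete as it stands.
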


\vspace{.3cm}
Let us list the following set of conditions:

\begin{itemize}

\item[$(H_0)$] The function $k(t, s)$ is continuous and
non-negative on $J \times J$ with
$$
e^{-\tau} = \sup_{ t, s \in J} k(t, s)
$$
for some $\tau \in \mathbb{R}^{+}$;

\item[$(H_1)$] The multi-valued mapping $F(t, x)$ is
Carath\'eodory;

\item[$(H_2)$] The multi-valued mapping $F(t, x)$ is increasing
in $x$ almost everywhere for $t \in  J$;

\item[$(H_3)$] There exist $\tau \in \mathbb{R}^{+}$ and $\varphi \in \Phi$ such that
$$
\left\vert F(s,x(s))-F(s,y(s))\right\vert \leq  e^{-\tau} \varphi(\Delta(x,y))
$$
for all $s \in J$, $x \in E$, where
\begin{eqnarray*}
\nonumber
\Delta(x,y) &=& { \alpha} |fx-fy| + \frac{ { \beta} \big[1+
|fx-Tx|\big]|fy-Ty| }{1+ |fx - fy|}+{\gamma} \big[|fx - Tx | + |fy - Ty|\big]\\
&+&\delta \big[{|fx - Ty|} + |fy - Tx|\big]
\end{eqnarray*}
${\text {with}}~ \alpha, \beta, \gamma,
 \delta \geq 0, \alpha + \beta + 2\gamma +2 \delta \leq 1;$

\item[$(H_4)$] $S^1_F(x) \neq \emptyset$ for each $x \in C(J,
E)$.
\end{itemize}

\vspace{.3cm}
\begin{theorem}
Suppose that the conditions $(H_0)$--$(H_4)$  hold. Then the integral
inclusion \eqref{I-1} has a solution in $[a, b]$ defined on $J$.
\end{theorem}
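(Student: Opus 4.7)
The plan is to reformulate \eqref{I-1} as a multi-valued fixed-point problem and then to apply Theorem~\ref{theorem2} with the identity map as the single-valued factor. Define $T:C(J,E)\to CB(C(J,E))$ by
$$
Tx \;=\; \Big\{ u\in C(J,E) \,:\, u(t)=q(t)+\int_0^{\sigma(t)} k(t,s)v(s)\,ds,\ v\in S^1_F(x) \Big\}.
$$
By $(H_4)$, each $Tx$ is non-empty; the $L^1$-Carath\'eodory assumption $(H_1)$ together with the closed-graph lemma stated above imply that $Tx$ is closed and bounded in $C(J,E)$, so $Tx\in CB(C(J,E))$. By construction, $x\in Tx$ is equivalent to $x$ solving \eqref{I-1}. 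Take $f=\mathrm{id}_{C(J,E)}$. Then $\mathcal{C}(f,T)=\{v:v\in Tv\}$ and the equation $ffv=fv$ is automatic, so $(f,T)$ is occasionally coincidentally idempotent as soon as it has a coincidence point. It therefore suffices to verify the rational Hardy-Rogers $(F,\varphi)$-contraction \eqref{equation2} with $p=1$ and $F(t)=\ln t$, and the $CLR_{f}$ property.

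For the contraction, fix $x,y\in C(J,E)$ with $Tx\neq Ty$ and any $u\in Tx$ represented by some $v_x\in S^1_F(x)$. By $(H_3)$, for a.e.\ $s$ the sets $F(s,x(s))$ and $F(s,y(s))$ lie within Hausdorff distance $e^{-\tau}\varphi(\Delta(x,y))$; using $(H_2)$ for non-emptiness together with a measurable selection theorem of Kuratowski-Ryll-Nardzewski type applied to the multifunction
$$
s\mapsto \big\{z\in F(s,y(s)): \|v_x(s)-z\|_E\leq e^{-\tau}\varphi(\Delta(x,y))\big\},
$$
one extracts $v_y\in S^1_F(y)$ with $\|v_x(s)-v_y(s)\|_E \leq e^{-\tau}\varphi(\Delta(x,y))$ a.e. Setting $w(t)=q(t)+\int_0^{\sigma(t)} k(t,s)v_y(s)\,ds\in Ty$ and invoking $(H_0)$,
$$
\|u(t)-w(t)\|_E \leq \int_0^{\sigma(t)} k(t,s)\|v_x(s)-v_y(s)\|_E\,ds \leq e^{-2\tau}\varphi(\Delta(x,y)) \leq e^{-\tau}\varphi(\Delta(x,y)).
$$
Taking the supremum over $t$ and interchanging the roles of $x,y$ yields $\mathcal H(Tx,Ty)\leq e^{-\tau}\varphi(\Delta(x,y))$, which after taking logarithms is precisely \eqref{equation2} for $p=1$ and $f=\mathrm{id}$.

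For the $CLR_{\mathrm{id}}$ property, I would exploit the lower and upper solutions $a,b$ by constructing inductively an order-monotone sequence $\{x_n\}\subset[a,b]$ with $x_{n+1}\in Tx_n$; the monotonicity hypothesis $(H_2)$, together with the defining inequalities for $a$ and $b$, guarantees that such selections can be kept within the order interval $[a,b]$. The contraction just established forces $\{x_n\}$ to be Cauchy in the Banach space $C(J,E)$, hence $x_n\to x^*\in[a,b]$ uniformly on $J$, while $Tx_n\to A\in CB(C(J,E))$ in Hausdorff distance with $x^*\in A$; this is exactly the $CLR_{\mathrm{id}}$ property. Theorem~\ref{theorem2} then delivers $x^*\in Tx^*$, a solution of \eqref{I-1} lying in $[a,b]$. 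The principal technical obstacle is the measurable-selection step in the contraction verification, since one must simultaneously remain inside $S^1_F(y)$ and within the $e^{-\tau}\varphi(\Delta(x,y))$-sleeve of $v_x$; this is where $(H_2)$ (to ensure non-empty fibres) and the closed-graph lemma (to ensure measurability of the chosen branch) are essential.
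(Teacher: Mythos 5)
Your proposal is correct in substance and follows the same overall route as the paper: the same operator $T$, the reduction to Theorem~\ref{theorem2} with $f=\mathrm{id}$, $p=1$ and $F(t)=\ln t$, and the observation that a coincidence point of $(\mathrm{id},T)$ is already a solution of \eqref{I-1}. Where you differ is in rigor rather than strategy, mostly to your advantage. The paper verifies condition \eqref{equation2} by estimating $\|\vartheta(t)-\mu(t)\|_E$ for arbitrary elements $\vartheta\in Tx$, $\mu\in Ty$ (it even writes $v_1,v_2\in S^1_F(x)$) and then passes directly to the Hausdorff bound; your Nadler-type measurable-selection argument, choosing $v_y\in S^1_F(y)$ inside the $\varphi$-sleeve of $v_x$, is the correct way to get $\mathcal{H}(Tx,Ty)\le e^{-\tau}\varphi(\Delta(x,y))$, and the spare factor $e^{-\tau}$ you keep absorbs the $\varepsilon$ needed when the values of $F$ are merely closed. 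Likewise the paper never verifies the $CLR_{\mathrm{id}}$ property at all (it only remarks that $T$ is a closed mapping before invoking Theorem~\ref{theorem2}), whereas you construct the required sequence by a monotone iteration kept in $[a,b]$ together with a Cauchy estimate; this part of yours is a sketch, but the missing computation (near-optimal selections $x_{n+2}\in Tx_{n+1}$ and the ratio $e^{-\tau}(\alpha+\gamma+\delta)/\big(1-e^{-\tau}(\beta+\gamma+\delta)\big)<1$ coming from $\alpha+\beta+2\gamma+2\delta\le 1$) is routine, so I view it as fillable rather than a gap. Two small corrections: non-emptiness of the fibres in your selection step comes from the Hausdorff estimate in $(H_3)$ (with the $\varepsilon$-slack), not from the monotonicity hypothesis $(H_2)$, whose actual role --- as you use it later --- is to keep the values of the integral operator, and hence the iterates, inside $[a,b]$; and $(H_1)$ as stated only makes $F$ Carath\'eodory, so closedness and boundedness of $Tx$ should be argued from the closed-graph lemma together with the order interval $[a,b]$ (or an $L^1$-Carath\'eodory bound), rather than attributed to $(H_1)$ alone.
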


\begin{proof}
Let $X = C(J, E)$. Define a multi-valued mapping $T : [a, b]\subset X \to 2^X$ given by
$$
Tx = \biggl\{u\in [a,b] : u(t) = q(t) + \int_{0}^{\sigma(t)} k(t,
s)v(s)\,ds;~~ v \in S^1_F(x), \text{ for every } t \in [0, 1]
\biggr\}.
$$
Observe that $T$ is well-defined, as owing to $(H_4)$,
$S^1_F(x) \neq \emptyset$. To show that $T$ satisfies all
hypotheses of Theorem \ref{theorem2} defined on $[a, b]$.

\vspace{.3cm}
For all $\vartheta, \mu \in 2^X$ on $t \in J$ and making use of $(H_0)$ and
$(H_3)$, we have
\begin{align*}
\|\vartheta(t) - \mu(t) \|_E &= \biggl\| \int_{0}^{\sigma(t)} k(t,
s)v_1(s)\,ds - \int_{0}^{\sigma(t)} k(t, s)v_2(s)\,ds
\biggr\|_E \\
&\leq \int_{0}^{\sigma(t)} k(t, s)\, ds \| v_1(s) - v_2(s) \|_E\\
&\leq \sup_{ t, s \in J} k(t, s) \varphi(\Delta(v_1, v_2)) \text{ for } v_1, v_2
\in S^1_F(x).
\end{align*}
This implies that
\begin{align*}
\|\vartheta(t) - \mu(t) \|_{E} &\leq e^{-\tau} \varphi(\Delta(v_1, v_2)).
\end{align*}
for each $t \in J$. Passing to logarithms, we can write this as
\begin{align*}
\tau + \ln\|\vartheta(t) - \mu(t) \|_{E} & \leq \ln(\varphi(\Delta(v_{1},v_{2}))).
\end{align*}
If we consider $F\in\mathcal{F}$ defined by $F(z) = \ln z$, for
each $z\in(0,+\infty)$, we deduce that the operator $T$ satisfy
condition (\ref{equation2}) where $f$ is an identity mapping  and $p=1$. Also
$T$ is a closed mapping, using Theorem \ref{theorem2}, we conclude
that the given integral inclusion has a solution in $[a,b]$.
\end{proof}

\vspace{.3cm}
\noindent{\bf Competing interests:}
The authors declare that they have no competing interests.

\vspace{.3cm}
\noindent{\bf Authors’ contributions:}
All authors contributed equally and significantly in writing this article. All authors read and approved the final manuscript.

\vspace{.5cm}

\end{document}